\let\subset\subseteq
\let\eps\varepsilon
\let\rho\varrho
\def\dcup{\dot\cup}
\def\Prob{\mathbb{P}}
\def\cX{\mathcal{X}}
\def\cY{\mathcal{Y}}
\def\cZ{\mathcal{Z}}
\def\cJ{\mathcal{J}}
\def\cR{\mathcal{R}}
\def\itmit#1{\rm ({\it #1\,})}
\def\rom{\itmit{\roman{*}}}
\def\abc{\itmit{\alph{*}}}
\newtheorem{theorem}{Theorem}
\newtheorem{lemma}[theorem] {Lemma}
\newtheorem{corollary}[theorem] {Corollary}
\newtheorem{conjecture}[theorem] {Conjecture}
\newtheorem{definition}[theorem] {Definition}
\theoremstyle{remark}
\newtheorem*{remark*}{Remark}
\newtheorem{AuxClaim}[theorem]{Claim}
\definecolor{Red}{cmyk}{0,1,1,0}
\definecolor{Magenta}{cmyk}{0,1,0,0}
\definecolor{Blue}{cmyk}{1,1,0,0}
\newcommand{\oldequation}{ }
\newenvironment{reequation}[1]{
  \renewcommand{\oldequation}{\theequation}
  \renewcommand{\theequation}{#1'}
  \begin{equation}
  }{
  \end{equation}
  \renewcommand{\theequation}{\oldequation} 
  \addtocounter{equation}{-1}
  \par\noindent%
  \ignorespacesafterend
}
\newcommand{\By}[2]{\overset{\mbox{\tiny{#1}}}{#2}}
\newcommand{\ByRef}[2]{   \By{\eqref{#1}}{#2} }
\newcommand{\eqByRef}[1]{ \ByRef{#1}{=} }
\newcommand{\leByRef}[1]{ \ByRef{#1}{\le} }
\newcommand{\BAR}[1]{{\smash{\overset{\scalebox{2}[1.2]{\!\raisebox{-.5pt}{\_}}}{\smash{#1}\vphantom{X}}}}}
\newcommand{\neighbour}{\Gamma}
\newcommand{\cneighbour}{\Gamma^*}
\newcommand{\Inj}{\mathcal{I}}
\DeclareMathOperator{\dis}{dis}
\DeclareMathOperator{\its}{int}
\title[Coloured copies of large graphs with small maximum degree]
  {Properly coloured copies and rainbow copies of large graphs with small
   maximum degree}
\author{Julia B\"ottcher}
\address{Instituto de Matem\'atica e Estat\'{\i}stica, Universidade de
 S\~ao Paulo, Rua do Mat\~ao 1010, 05508--090~S\~ao Paulo, Brazil}
\email{julia@ime.usp.br}
\author{Yoshiharu Kohayakawa}
\address{Instituto de Matem\'atica e Estat\'{\i}stica, Universidade de
 S\~ao Paulo, Rua do Mat\~ao 1010, 05508--090~S\~ao Paulo, Brazil}
\email{yoshi@ime.usp.br}
\author{Aldo Procacci}
\address{
  Departamento de Matem\'atica,
  Universidade Federal de Minas Gerais,
  Avenida Ant\^onio Carlos 6627, Caixa Postal 702, 30161--970~Belo Horizonte,
  Brazil
}
\email{aldo@mat.ufmg.br}
\thanks{ 
  The first author was partially supported by FAPESP (Proc.~2009/17831-7).
  The second author was partially supported by CNPq (Proc.~308509/2007-2).
  The third author was partially supported by CNPq (Proc.~3302517/2007-3).
  The authors are grateful to NUMEC/USP, Núcleo de Modelagem Estocástica e
  Complexidade of the University of São Paulo, for supporting this
  research.
}
\date{\today}
\begin{document}

\begin{abstract}
  Let~$G$ be a graph on~$n$ vertices with maximum degree~$\Delta$.  We use
  the Lov\'asz local lemma to show the following two results about
  colourings~$\chi$ of the edges of the complete graph~$K_n$.
  If for each vertex~$v$ of~$K_n$ the colouring~$\chi$ assigns each colour
  to at most $(n-2)/22.4\Delta^2$ edges emanating from~$v$, then there is a
  copy of~$G$ in~$K_n$ which is properly edge-coloured by~$\chi$.  This
  improves on a result of Alon, Jiang, Miller, and Pritikin [Random
  Struct.\ Algorithms~23(4), 409--433, 2003].
  On the other hand, if~$\chi$ assigns each colour to at most
  $n/51\Delta^2$ edges of~$K_n$, then there is a copy of~$G$ in~$K_n$ such
  that each edge of~$G$ receives a different colour from~$\chi$.  This
  proves a conjecture of Frieze and Krivelevich [Electron.\ J.~Comb.~15(1),
  R59, 2008].

  Our proofs rely on a framework developed by Lu and Sz\'ekely
  [Electron.\ J.~Comb.~14(1), R63, 2007] for applying the local lemma
  to random injections. 
  In order to improve the constants in our results we use a version of
  the local lemma due to Bissacot, Fern\'andez, Procacci, and Scoppola
  [preprint, arXiv:0910.1824].

  \bigskip
  \noindent
  {\sc Keywords.}
  Ramsey theory; local lemma; rainbow colourings; proper edge colourings
\end{abstract}

\maketitle

\section{Results}

The canonical Ramsey theorem~\cite{ErdRad}  states that for any graph~$G$, each
colouring of the edges of the complete graph~$K_n$ contains at least one of
the following types of $G$-copies: a monochromatic copy of~$G$; a copy
of~$G$ such that each edge has a different colour; a copy of~$G$ such
that, after we order the vertices of~$G$ appropriately, 
the colour of each edge is determined solely by the first vertex of this
edge with respect to this ordering.
The classical Ramsey theorem~\cite{Ramsey} is a special case of this result
where the \emph{number} of colours used in the colouring is restricted.  In this
paper we consider restrictions on the edge colourings of~$K_n$ which are of
different nature and show that under such restrictions certain copies of
\emph{spanning} graphs~$G$ exist.

Let $F$ be a graph. 
% In the following we consider colourings of the edges
% of~$F$, which we occasionally simply call colouring.  
An edge
colouring~$\chi$ of~$F$ is called \emph{$k$-bounded} if it does not use
any colour more than~$k$ times. It is called \emph{locally $k$-bounded} if
for every vertex~$v$ of~$F$ it does not use any colour more than~$k$ times
on the edges incident to~$v$. We say that~$\chi$ is \emph{proper} if
intersecting edges of~$F$ receive different colours, and that it is
\emph{rainbow} if all edges of~$F$ receive different colours.

Given a graph~$G$ on~$n$ vertices we will consider problems of the
following type: For which numbers~$k$ does the complete graph~$K_n$ have
one of the following properties?
\begin{enumerate}[label=\abc]
  \item\label{qu:proper}
    Every locally $k$-bounded colouring~$\chi$ of~$K_n$ contains a copy
   of~$G$ which is properly coloured by~$\chi$.
 \item\label{qu:rainbow}
    Every $k$-bounded colouring~$\chi$ of~$K_n$ contains a copy
    of~$G$ which is rainbow coloured by~$\chi$.
\end{enumerate}
We also say that a colouring of~$K_n$ is \emph{$G$-proper} if it has the property asserted
by~\ref{qu:proper}, and \emph{$G$-rainbow} if it has the property asserted
by~\ref{qu:rainbow}.

\medskip

Our proofs apply a probabilistic method and rely on the Lov\'asz local
lemma~\cite{ErdLov75}. 

\subsection{Properly coloured subgraphs}

An old conjecture of Bollob\'as and Erd\H{o}s~\cite{BolErd76} states that
for $k=\lfloor n/2\rfloor$
every locally $k$-bounded colouring of~$K_n$ is $C_n$-proper,
where~$C_n$ is the cycle on~$n$ vertices (that is, a Hamilton cycle).
Bollob\'as and Erd\H{o}s showed that this is true for $k<n/69$. This was improved by
Shearer~\cite{Shearer79} to $k<n/7$, and by Chen and Daykin~\cite{CheDay} to
$k\le n/17$. Currently, the best result is due to Alon and
Gutin~\cite{AloGut}, who showed that for any~$\eps>0$ there is an~$n_0$ such
that for all $n\ge n_0$ we can choose $k=\lfloor(1-2^{-1/2}-\eps)n\rfloor$.

Generalising from~$C_n$ to a larger class of graphs, Shearer~\cite{Shearer79}
proposed the following conjecture. A \emph{cherry} in a graph~$G$ is a
copy of a path of length~$2$ in~$G$. 

\begin{conjecture}[Shearer~\cite{Shearer79}]
\label{con:Shearer}
  For fixed~$k$ and~$p$, if~$n$ is sufficiently large and if~$G$ is an
  $n$-vertex graph containing at most~$pn$ cherries, then
  every locally $k$-bounded edge colouring of~$K_n$ is $G$-proper.
\end{conjecture}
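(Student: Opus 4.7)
The natural plan is to apply the Lov\'asz local lemma via the Lu-Sz\'ekely framework for random injections mentioned in the abstract. Pick $\pi\colon V(G)\to V(K_n)$ uniformly at random and, for every cherry $P=uvw$ of $G$ (path of length $2$ with centre $v$), let $A_P$ be the bad event that $\chi\bigl(\pi(u)\pi(v)\bigr)=\chi\bigl(\pi(v)\pi(w)\bigr)$. Since a colouring is proper exactly when no cherry is monochromatic, avoiding every $A_P$ is equivalent to $\chi$ colouring the random copy $\pi(G)$ properly, so it suffices to prove $\Prob\bigl[\bigcap_P \overline{A_P}\bigr]>0$.

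The first technical step is to bound $\Prob[A_P]$. Condition on $\pi(v)=x$ and $\pi(u)=y$: the colour $\chi(xy)$ is then fixed, and since the restriction of $\chi$ to the star at $x$ is $k$-bounded by hypothesis, at most $k-1$ of the remaining $n-2$ vertices $z$ satisfy $\chi(xz)=\chi(xy)$. Hence $\Prob[A_P]\le (k-1)/(n-2)=:p^*$.

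The second step is to set up the dependency graph: $A_P$ and $A_{P'}$ can only be dependent when $V(P)\cap V(P')\ne\emptyset$, and the number of cherries of $G$ sharing a vertex with a fixed cherry $P$ is at most $3D$, where $D$ is the maximum over $a\in V(G)$ of the number of cherries of $G$ passing through $a$. One would then invoke the negative-correlation version of the local lemma for random injections developed by Lu and Sz\'ekely, combined with the Bissacot-Fern\'andez-Procacci-Scoppola refinement to sharpen constants, which reduces the problem to a criterion essentially of the form $3e\,p^* D\le 1$.

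The main obstacle is exactly $D$. The hypothesis $\sum_{v}\binom{\deg_G(v)}{2}\le pn$ controls only the \emph{average} cherry-degree of a vertex of $G$, not the maximum: a single hub of degree $\sqrt n$ (with everything else incident to leaves) already forces $D=\Theta(n)$ while the total cherry count stays $O(n)$, and then $ep^*D=\Theta(k)$, which fails for every $k\ge 1$. To close this gap one would have to exploit the strong negative correlation between the $\binom{\deg_G(a)}{2}$ bad events centred at a single hub $a$, perhaps through the full asymmetric cluster-expansion form of the local lemma, or else embed the heavy vertices of $G$ by hand and invoke the LLL only for the low-degree remainder. I expect this difficulty to be essential, and it is presumably why Conjecture~\ref{con:Shearer} has resisted proof and why the paper can settle only the case in which $G$ has small maximum degree~$\Delta$.
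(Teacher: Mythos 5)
The statement you were asked about is a \emph{conjecture}: the paper does not prove it, and neither you nor the authors could be expected to. Your proposal is therefore best judged as a diagnosis of why the natural local-lemma attack falls short, and on that score it is essentially correct and matches the paper's own treatment. The obstruction you identify -- that the hypothesis ``at most $pn$ cherries in total'' controls only the average, not the maximum, number of cherries through a vertex, so the degree of the dependency graph can be $\Theta(n)$ at a hub -- is exactly why the paper retreats to Theorem~\ref{thm:proper}, which adds the local hypothesis that every vertex lies in at most $q$ cherries, and to Corollary~\ref{cor:proper} for bounded maximum degree. The paper then observes that this settles Conjecture~\ref{con:Shearer} for graphs with $\Delta(G)\le c\sqrt n$, while the conjecture itself allows $\Delta$ up to roughly $\sqrt{2pn}$; so the remaining gap is a constant factor in the maximum degree, and your assessment that the difficulty is essential to the conjecture as stated is fair.

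Two technical corrections to your sketch. First, your dependency graph is too small: with a uniformly random injection, events $A_P$ and $A_{P'}$ for vertex-disjoint cherries $P,P'$ of $G$ are \emph{not} independent, because the images compete for the same vertices of $K_n$. This is precisely why the Lu--Sz\'ekely framework works with canonical events indexed by both the cherry in $G$ \emph{and} its prescribed image in $K_n$ (the events $X(\vec e,\vec f;\vec a,\vec b)$ of Section~\ref{sec:proofs}); the neighbourhood of such an event splits into $G$-intersecting and $K_n$-intersecting parts, and it is in bounding the $K_n$-intersecting part that the global hypothesis of at most $pn$ cherries is actually used (contributing the $3p$ term in Theorem~\ref{thm:proper}). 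Your coarse events $A_P$ would need the negative-dependency structure justified separately, and your count of dependencies omits the image side entirely. Second, the refined criterion in the paper is not of the symmetric form $3e\,p^*D\le 1$; the cluster-expansion version (Lemma~\ref{lem:LLLL:new}) is applied by decomposing each closed neighbourhood into six cliques (three from the $G$ side, three from the $K_n$ side), which is where the improved constants come from.
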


This conjecture considers the case of constant~$k$, but allows graphs~$G$ with
maximum degree~$\Omega(\sqrt{n})$. 
Replacing the global condition on the number of cherries in~$G$ by a local condition,
Alon et al.~\cite{AloJiaMilPri} proved that for fixed~$k$ and~$\Delta$,
if~$n$ is sufficiently large and~$G$ is an $n$-vertex graph with maximum
degree~$\Delta$ then every locally $k$-bounded edge colouring of~$K_n$ is
$G$-proper. More precisely they showed the following result.

\begin{theorem}[Alon, Jiang, Miller, Pritikin~\cite{AloJiaMilPri}]
  Let~$G$ be an $n$-vertex graph with maximum degree~$\Delta$. If~$k$
  satisfies $216(3k+2\Delta)^7(\Delta+1)^{20}k<n$ then any locally
  $k$-bounded edge colouring of~$K_n$ is $G$-proper.
\end{theorem}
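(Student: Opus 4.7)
The plan is to use the probabilistic method with the Lov\'asz Local Lemma, applied to a uniformly random injection $\phi\colon V(G)\to V(K_n)$. For each \emph{cherry} $P$ of $G$—that is, each unordered pair of distinct edges of $G$ sharing a vertex—introduce a bad event $A_P$ asserting that the two edges of $P$ receive the same colour under $\chi\circ\phi$. If no $A_P$ occurs then $\phi$ embeds $G$ as a properly coloured subgraph, so it suffices to prove that with positive probability no $A_P$ holds.

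The first key step is to bound $\Prob(A_P)$. Write the cherry $P$ as having centre $w$ and leaves $u_1,u_2$. Conditioning on $\phi(w)=v$, the pair $(\phi(u_1),\phi(u_2))$ is uniformly distributed over ordered pairs of distinct vertices in $V(K_n)\setminus\{v\}$. Writing $d_v^c$ for the number of edges of colour $c$ incident to $v$, the number of ordered pairs $(x,y)$ of distinct vertices of $V(K_n)\setminus\{v\}$ with $\chi(vx)=\chi(vy)$ equals $\sum_c d_v^c(d_v^c-1)\le k\sum_c d_v^c=k(n-1)$, by local $k$-boundedness. Dividing by $(n-1)(n-2)$ we obtain $\Prob(A_P)\le k/(n-2)$, uniformly in $v$ and hence unconditionally.

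The second step is to analyse the dependency structure. Working in the Lu--Sz\'ekely framework for random injections, two events $A_P, A_{P'}$ are declared dependent whenever the cherries $P$ and $P'$ share a vertex of $G$. Each vertex of $G$ is the centre of at most $\binom{\Delta}{2}$ cherries and an endpoint of at most $\Delta(\Delta-1)$ cherries, so every vertex lies in $\bigO(\Delta^2)$ cherries and each cherry (containing three vertices) is dependent on $\bigO(\Delta^2)$ others. The symmetric LLL then succeeds as soon as $e\cdot k/(n-2)\cdot(\bigO(\Delta^2)+1)\le 1$, which is amply guaranteed by the hypothesis $216(3k+2\Delta)^7(\Delta+1)^{20}k<n$.

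The main obstacle is that a uniformly random injection does not yield mutually independent vertex assignments, so the classical LLL with its standard notion of dependency does not apply verbatim. The Lu--Sz\'ekely variant of the LLL for random injections addresses exactly this issue by furnishing, for events determined by the values of $\phi$ on prescribed coordinate sets, a canonical dependency graph based on shared coordinates; the substantive part of the proof is to verify that the cherry-based events fit this framework and to carry the probability and dependency estimates through rigorously. Beyond this formality the argument reduces essentially to the two short computations above.
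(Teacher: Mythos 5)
Your probability estimate $\Prob(A_P)\le k/(n-2)$ is fine, but the dependency analysis has a genuine gap, and it sits exactly at the point you defer to ``formality.'' The events $A_P$ (``cherry $P$ lands on a monochromatic pair'') are \emph{not} canonical events in the Lu--Sz\'ekely sense: a canonical event fixes the images of a prescribed set of vertices, whereas $A_P$ is the union, over all monochromatic cherries of $K_n$, of such events. Lemma~\ref{lem:canon} gives a negative dependency graph only for canonical events, with edges between \emph{conflicting} ones, and two canonical events conflict not only when they share a vertex of $G$ but also when they share a vertex of $K_n$. Consequently, for your aggregated events, vertex-disjointness of $P$ and $P'$ in $G$ does \emph{not} yield independence or negative dependence: under a uniformly random injection the images of disjoint cherries are coupled (they compete for the same vertices of $K_n$), and no version of the local lemma in this framework lets you declare such pairs non-adjacent. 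So the step ``each cherry is dependent on $\bigO(\Delta^2)$ others'' is unjustified, and it is false for the dependency graph that the framework actually provides.

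The repair is what the paper does in its proof of Theorem~\ref{thm:proper}: take as bad events the canonical events $X(\vec e,\vec f;\vec a,\vec b)$ that fix both the cherry $e,f$ of $G$ and its monochromatic image $a,b$ in $K_n$. Each such event has probability $1/(n)_3$, and its neighbourhood in the intersection graph decomposes into three cliques of $G$-intersecting events of size at most $q(n)_2k$ \emph{and} three cliques of $K_n$-intersecting events of size at most $3p(n)_2k$; the second family, which your argument omits entirely, is where the real bookkeeping lies. The resulting condition is still $k=\Omega(n/\Delta^2)$, so the stated theorem (whose hypothesis only permits $k$ of order roughly $n^{1/8}$ for bounded $\Delta$) follows comfortably once the argument is set up correctly --- but as written your proof does not establish it.
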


Notice that for graphs~$G$ whose maximum degree is bounded by a constant,
the parameter~$k$ can be of order $\Omega(n^{1/8})$ in this theorem.  Alon
et al.\ also suspected that it should be possible to replace this bound
on~$k$ by a linear bound.

As a consequence of the following theorem we show in
Corollary~\ref{cor:proper} that this is indeed possible. The theorem which
implies this corollary uses the global condition on the total number of
cherries from the conjecture of Shearer together with a local condition on
the distribution of these cherries.

\begin{theorem}
\label{thm:proper}
  Let~$G$ be a graph on~$n$ vertices containing at most~$pn$ cherries such
  that each vertex is contained in at most~$q$ cherries. For $k\le
  \frac13\big(\frac56\big)^5(n-2)/(q+3p)$ every locally $k$-bounded edge
  colouring of $K_n$ is $G$-proper.
\end{theorem}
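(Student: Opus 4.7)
The plan is to let $\phi\colon V(G)\to V(K_n)$ be a uniformly random injection and to show, via the local lemma, that with positive probability the image $\phi(G)$ is properly coloured by~$\chi$. For each cherry $P$ in~$G$ with ordered vertex sequence $(u,v,w)$, let $A_P$ be the bad event that $\chi(\phi(u)\phi(v))=\chi(\phi(v)\phi(w))$; avoiding every~$A_P$ is exactly the statement that $\phi(G)$ is properly coloured, so it suffices to prove $\Prob\bigl(\bigcap_P \overline{A_P}\bigr)>0$. For the single-event probability, I would fix a vertex $x\in V(K_n)$, observe that local $k$-boundedness gives $\sum_d k_d(x)(k_d(x)-1)\le (k-1)(n-1)$ ordered pairs of distinct edges at~$x$ sharing a colour, and average over the uniform choice of the middle vertex $\phi(v)=x$ to obtain $\Prob(A_P)\le (k-1)/(n-2)$.

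Next I would invoke the Lu--Sz\'ekely framework to justify that the graph on cherries in which two cherries are adjacent iff they share a vertex in~$V(G)$ is a valid (negative) dependency graph for the local lemma, despite the events being defined on a random injection rather than on an independent-trials space. The local cherry bound~$q$ controls this dependency graph: the cherries through a fixed vertex $v\in V(G)$ form a clique of size at most~$q$, and the neighbourhood of each $A_P$ sits inside the union of three such cliques, one per vertex of~$P$. I would then apply the Bissacot--Fern\'andez--Procacci--Scoppola refinement of the local lemma, whose sufficient condition replaces the classical product $\prod_{P'\sim P}(1-x_{P'})$ by a sum $\sum_{I}\prod_{P'\in I}\mu_{P'}$ over \emph{independent} subsets~$I$ of the cherry-neighbourhood of~$P$. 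Because this neighbourhood is a union of cliques, its independent-set polynomial is far smaller than in the worst case, and estimating it using the local parameter~$q$ together with the global cherry count~$pn$ should produce the denominator $q+3p$. A symmetric choice of weights~$\mu$ and optimisation would then yield the explicit constant $\tfrac13\bigl(\tfrac56\bigr)^5$.

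The main obstacle I anticipate is a careful execution of this last step. A crude application of the classical symmetric local lemma already gives a bound with denominator of order~$q$, but the precise constants claimed in the theorem, and the appearance of the \emph{global} parameter~$p$ rather than just~$q$, reflect a delicate computation: one has to track which cherries lie in each of the three cliques attached to~$P$, use the clique structure to sharpen the Bissacot et al.\ independent-set sum, and relate the leftover contribution to the global cherry count through the fact that the total number of cherries is~$pn$. The interplay between the injection-induced negative correlations handled by Lu--Sz\'ekely and the cluster-expansion bound of Bissacot et al.\ is where the technical heart of the proof lies.
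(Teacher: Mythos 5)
There is a genuine gap, and it sits exactly where your proposal is vaguest. You define the bad events as $A_P$, one per cherry $P$ of $G$, and then propose to take as (negative) dependency graph the graph on cherries of $G$ in which two cherries are adjacent iff they share a vertex of $G$. This is not a valid negative dependency graph for a uniformly random injection, and the Lu--Sz\'ekely lemma does not certify it: that lemma applies only to \emph{canonical} events, i.e.\ events of the form ``$\phi$ restricted to a fixed set $A'$ equals a fixed bijection $\pi$'', with edges between \emph{conflicting} canonical events. Your $A_P$ is a union of many canonical events, not a canonical event, and even for vertex-disjoint cherries $P,P'$ the events $A_P$ and $A_{P'}$ are correlated through the image side: conditioning on where $P$ lands changes the distribution of where $P'$ can land, and depending on how the monochromatic cherries of $\chi$ are distributed in $K_n$ this can push $\Prob(A_{P'})$ up. So the independence structure you assert is unjustified and in general false.

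The fix is the one the paper uses, and it also explains the feature you flagged as mysterious, namely where the global parameter $p$ enters. The bad events must be indexed by \emph{pairs}: a cherry $e,f$ of $G$ together with a specific monochromatic cherry $a,b$ of $K_n$ and a specific bijection between their vertex sets. These are canonical events of probability $1/(n)_3$, and the negative dependency graph has edges both between $G$-intersecting events (events whose $G$-cherries share a vertex --- controlled by $q$, giving three cliques of size at most $q(n)_2k$) and between $K_n$-intersecting events (events whose target cherries in $K_n$ share a vertex --- and here \emph{any} of the at most $pn$ cherries of $G$ can be the source, giving three cliques of size at most $3p(n)_2k$). The denominator $q+3p$ in the theorem is precisely the sum of these two clique-size parameters after normalisation; it does not arise from ``sharpening the independent-set sum via the global cherry count'' within your vertex-sharing graph, but from an entire second family of dependencies that your dependency graph omits. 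Your single-event estimate $\Prob(A_P)\le (k-1)/(n-2)$ and your intent to use the Bissacot et al.\ clique decomposition are both in the right spirit, but without the $K_n$-side events and conflicts the argument cannot be completed.
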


Observe that a graph with maximum degree~$\Delta$ contains at most
$\binom{\Delta}{2}n\le\frac12\Delta^2n$ cherries, and each of its vertices
is contained in at most $\binom\Delta2+\Delta(\Delta-1)\le\frac32\Delta^2$
cherries. Therefore we immediately obtain the following corollary.

\begin{corollary}
\label{cor:proper}
  If~$G$ has $n$ vertices and maximum degree~$\Delta>0$ then any locally
 $\big((n-2)/22.4\Delta^2\big)$-bounded edge colouring of~$K_n$ is $G$-proper.
\end{corollary}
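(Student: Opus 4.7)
The plan is to apply Theorem~\ref{thm:proper} directly, after bounding the cherry-counting parameters $p$ and $q$ in terms of the maximum degree $\Delta$.

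First I would bound the total number of cherries in $G$. Every cherry is uniquely determined by its middle vertex together with the unordered pair of its two endpoints, both of which are neighbours of the middle vertex. Since every vertex has at most $\Delta$ neighbours, each vertex is the centre of at most $\binom{\Delta}{2}$ cherries, and summing over all $n$ vertices gives at most $\binom{\Delta}{2}n \le \tfrac{1}{2}\Delta^{2}n$ cherries. So I may take $p=\tfrac{1}{2}\Delta^{2}$ in Theorem~\ref{thm:proper}.

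Next I would bound the number of cherries through a fixed vertex $v$. Such a cherry is either centred at $v$, contributing at most $\binom{\Delta}{2}$ possibilities, or has $v$ as an endpoint, in which case there are at most $\Delta$ choices for the middle vertex $u$ (a neighbour of $v$) and then at most $\Delta-1$ choices for the third vertex (a neighbour of $u$ other than $v$). This yields at most $\binom{\Delta}{2}+\Delta(\Delta-1) \le \tfrac{3}{2}\Delta^{2}$ cherries through $v$, so I may take $q=\tfrac{3}{2}\Delta^{2}$.

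Substituting into Theorem~\ref{thm:proper} gives $q+3p \le 3\Delta^{2}$, so the theorem applies whenever $k \le \tfrac{1}{3}\bigl(\tfrac{5}{6}\bigr)^{5}(n-2)/(3\Delta^{2}) = \bigl(\tfrac{5}{6}\bigr)^{5}(n-2)/(9\Delta^{2})$. A routine numerical check (essentially the inequality $22.4 \cdot 3125 \ge 69984$) shows that $9(6/5)^{5}<22.4$, and therefore every locally $\bigl((n-2)/22.4\Delta^{2}\bigr)$-bounded colouring of $K_{n}$ lies within the range covered by Theorem~\ref{thm:proper}. There is no substantive obstacle here: the entire argument reduces to the two elementary cherry counts above, and the choice of the constant $22.4$ is made precisely to absorb the factor $9(6/5)^{5}$.
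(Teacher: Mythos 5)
Your argument is correct and coincides with the paper's: the same cherry counts $p=\tfrac12\Delta^2$ and $q=\tfrac32\Delta^2$ are plugged into Theorem~\ref{thm:proper}, and the constant $22.4$ is indeed chosen to dominate $9(6/5)^5=22.39488$. Nothing to add.
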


As mentioned earlier, this corollary asserts that for bounded degree
graphs~$G$ we can even choose~$k$ linear in~$n$, which clearly is best possible up
to the constant. Moreover, the corollary implies that for each constant~$k$
there is a constant $c=c(k)>0$ such that we can even take~$G$ with maximum
degree $c\sqrt{n}$ in Theorem~\ref{thm:proper}. This means that
Conjecture~\ref{con:Shearer} holds for graphs~$G$ with maximum
degree~$c\sqrt{n}$. 

In comparison, all graphs with maximum
degree~$\Delta$ which contain at most~$pn$ cherries satisfy
$\sqrt{2pn}\ge\big(2\binom{\Delta}{2}\big)^{1/2}\approx\Delta$.
It follows that the graphs~$G$ covered by Conjecture~\ref{con:Shearer} have maximum
degree less than roughly $\sqrt{2pn}$, where~$p$ is a constant. So, in
terms of the maximum degree, our result achieves the `right' order of
magnitude.

\subsection{Rainbow subgraphs}

Erd\H{o}s and Stein posed the question of determining the largest~$k$ such
that every $k$-bounded edge colouring of~$K_n$ is $C_n$-rainbow
(see~\cite{ErdNesRod}). Hahn and Thomassen~\cite{HahTho} conjectured
that~$k$ can be linear. This was shown by Albert, Frieze, and
Reed~\cite{AlbFriRee95} (see also~\cite{Rue}) who improved on earlier
sublinear bounds by Erd\H{o}s, Ne\v{s}et\v{r}il, and
R\"odl~\cite{ErdNesRod}, by Hahn and Thomassen~\cite{HahTho}, and by Frieze
and Reed~\cite{FriRee}.

\begin{theorem}[Albert, Frieze, Reed~\cite{AlbFriRee95}]
\label{thm:Albert}
  If $k\le n/64$ then every $k$-bounded edge colouring of~$K_n$ is
  $C_n$-rainbow.
\end{theorem}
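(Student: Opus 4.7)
The plan is to apply the Lov\'asz Local Lemma to a uniformly random Hamilton cycle $H$ of $K_n$. Fix a $k$-bounded edge colouring $\chi$ of $K_n$. For each unordered pair $\{e,f\}$ of distinct edges with $\chi(e)=\chi(f)$, let $B_{e,f}$ be the event that both $e$ and $f$ lie in $H$. If $\Prob\big[\bigcap_{\{e,f\}}\overline{B_{e,f}}\big]>0$, then at least one Hamilton cycle of $K_n$ uses no colour twice and is therefore a rainbow copy of $C_n$.

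First I would estimate $\Prob[B_{e,f}]$ by a direct count. The number of Hamilton cycles of $K_n$ is $(n-1)!/2$. Treating $e$ (and $f$, if disjoint from $e$) as a ``super-vertex'', or as a prescribed path of length~$2$ if $e$ and $f$ share a vertex, one obtains $\Prob[B_{e,f}]\le 4/((n-1)(n-2))$ in either case, so $\Prob[B_{e,f}]=O(1/n^2)$.

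Next I would bound the dependency degree. Two bad events $B_{e,f}$ and $B_{e',f'}$ should be declared independent when $V(e)\cup V(f)$ and $V(e')\cup V(f')$ are disjoint. In the random-injection model, that is, viewing $H$ as arising from a uniformly random bijection $[n]\to V(K_n)$ by reading off edges along consecutive positions, this notion of independence is precisely what is validated by the Lu--Sz\'ekely local lemma. Since $|V(e)\cup V(f)|\le 4$, at most $4(n-1)$ edges of $K_n$ meet $V(e)\cup V(f)$, and each such edge has at most $k-1$ same-coloured partners; hence at most $4(n-1)(k-1)\le 4nk$ events lie in the dependency neighbourhood of $B_{e,f}$.

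Combining, the symmetric LLL condition $e\cdot p\cdot d\le 1$ reduces to an inequality of the form $k\le cn$ for an absolute constant $c>0$, and routine bookkeeping should comfortably cover the claimed threshold $c=1/64$ (indeed, the crude estimates above already suggest a somewhat larger constant). The main obstacle is justifying the independence claim in the dependency step: the events $B_{e,f}$ are all measurable with respect to the same random Hamilton cycle and are not literally independent, even for vertex-disjoint edge sets. One must therefore either invoke the Lu--Sz\'ekely framework for random injections mentioned in the abstract or establish an explicit negative-correlation estimate for edge indicators of a uniformly random Hamilton cycle of $K_n$.
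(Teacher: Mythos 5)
First, note that the paper does not actually prove Theorem~\ref{thm:Albert}: it is quoted from Albert, Frieze and Reed, and the closest in-paper analogue is the proof of Theorem~\ref{thm:rainbow} (specialised to $G=C_n$) together with remark (ii) of the concluding section. Your proposal follows the original Albert--Frieze--Reed architecture, and it has a genuine gap exactly where you flag one, so the flag does not excuse it: the negative-dependency step is not a technicality to be ``invoked'' but the entire difficulty of this approach. Concretely, the events $B_{e,f}=\{e,f\subset E(H)\}$ are \emph{not} canonical events in the sense of Lemma~\ref{lem:canon}: in the random-injection model, ``$e$ lies on the cycle'' is a union of roughly $2n$ canonical events (one for each position and orientation of $e$ along the cyclic order), so the quoted Lu--Sz\'ekely lemma does not validate the dependency graph you propose (adjacency when the vertex sets of the edge pairs meet). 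One must either prove a negative-correlation estimate for such union events directly (this is the switching argument in Albert--Frieze--Reed, and it is also where their constant degrades to $64$ rather than the $16e\approx 43.5$ your unconditional estimates suggest --- so ``routine bookkeeping'' is not guaranteed to recover even $1/64$ without redoing that argument), or use the more general Lu--Sz\'ekely machinery for events that are unions of canonical events; you supply neither. Your probability bound $\Prob(B_{e,f})\le 4/((n-1)(n-2))$ and the neighbourhood count $\le 4(n-1)(k-1)$ are correct as far as they go.

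The paper's own method circumvents this issue by a different choice of bad events: each event fixes a lexicographically ordered pair of edges of $G$ \emph{and} an ordered pair of same-coloured images in $K_n$, i.e.\ it pins the random injection on three or four vertices. These are genuine canonical events, so Lemma~\ref{lem:canon} yields the negative dependency graph with no correlation estimate to prove; the work then shifts to decomposing each neighbourhood into cliques and applying Lemma~\ref{lem:LLLL:new}. To repair your proof, either establish the negative-correlation inequality for edge indicators of a random Hamilton cycle explicitly, or re-index your bad events as canonical events as in Section~\ref{sec:proofs}.
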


Frieze and Krivelevich~\cite{FriKri08} extended this result and showed that
there is a constant~$c>0$ such that for~$k\le cn$ every $k$-bounded edge
colouring of~$K_n$ is $C_\ell$-rainbow for all $3\le\ell\le n$. Moreover,
they considered almost spanning trees with bounded maximum degree and
proved the following theorem.

\begin{theorem}[Frieze, Krivelevich~\cite{FriKri08}]
  For every number $\eps>0$ and every integer $\Delta>0$ there is a
  constant $c>0$ such that the following holds for every tree~$T$ on at most
  $(1-\eps)n$ vertices with maximum degree~$\Delta$. If $k\le cn$ then
  every $k$-bounded edge colouring of~$K_n$ is $T$-rainbow.
\end{theorem}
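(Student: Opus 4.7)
The plan is to apply the Lov\'asz local lemma inside the Lu--Sz\'ekely framework for random injections, in the same spirit as the proof of Theorem~\ref{thm:proper}. I would start from a uniformly random injection $\phi\colon V(T)\to V(K_n)$, which is well defined since $|V(T)|\le(1-\eps)n\le n$. A rainbow copy of~$T$ is produced by~$\phi$ as soon as no two edges of $\phi(T)$ receive the same colour, so for each unordered pair $\{e,e'\}$ of distinct edges of~$T$ I would introduce the bad event $A_{e,e'}=\{\chi(\phi(e))=\chi(\phi(e'))\}$ and aim to show $\Prob\bigl[\bigcap_{\{e,e'\}}\overline{A_{e,e'}}\bigr]>0$.

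Two probability estimates are needed, according to whether $e$ and~$e'$ are disjoint or share a vertex. Since $\chi$ is $k$-bounded, the number of unordered same-colour pairs of edges of~$K_n$ is at most $\tfrac12(k-1)\binom{n}{2}$. For the random injection, the probability that a pair $\{e,e'\}$ spanning $j\in\{3,4\}$ vertices is mapped onto a specific pair $\{f,f'\}$ spanning $j$ vertices is of order $n^{-j}$, and the slack $|V(T)|\le(1-\eps)n$ keeps the resulting combinatorial factors bounded in terms of~$\eps$. Summing yields $\Prob[A_{e,e'}]=\bigO(k/n^2)$ for disjoint pairs and $\Prob[A_{e,e'}]=\bigO(k/n)$ for adjacent pairs.

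By the Lu--Sz\'ekely framework, $A_{e,e'}$ is mutually independent of the family of events $A_{f,f'}$ with $(V(f)\cup V(f'))\cap(V(e)\cup V(e'))=\emptyset$, because it is determined by $\phi|_{V(e)\cup V(e')}$. Since $T$ has maximum degree~$\Delta$, at most $4\Delta$ edges of~$T$ meet $V(e)\cup V(e')$, so $A_{e,e'}$ depends on at most $\bigO(\Delta n)$ disjoint-type events and at most $\bigO(\Delta^2)$ adjacent-type events. Plugging these into the asymmetric local lemma (or into the Bissacot--Fern\'andez--Procacci--Scoppola cluster-expansion version to tighten the constants), the condition to check reduces to $\bigO(k\Delta/n)+\bigO(k\Delta^2/n)\le\text{const}$, which holds for $k\le cn$ with a positive constant $c=c(\Delta,\eps)$, producing a rainbow embedding with positive probability.

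The main obstacle will be the adjacent case: although few in number ($\bigO(n\Delta^2)$ such events), these carry the larger per-event probability $\bigO(k/n)$, and the symmetric form of the local lemma already fails for them. Dealing with them requires either an asymmetric LLL or class-dependent $x$-values, and this is where the factor $\Delta^2$ enters the constant~$c$. Setting up Lu--Sz\'ekely's injection framework so that disjoint-vertex events really are mutually independent is the other technical burden one has to handle carefully.
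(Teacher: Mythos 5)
Your overall strategy is the right one: the paper does not prove this cited theorem of Frieze and Krivelevich directly, but its Theorem~\ref{thm:rainbow} (which supersedes it, and needs no slack $|V(T)|\le(1-\eps)n$) is proved by exactly the random-injection-plus-local-lemma route you sketch, with the same dichotomy between adjacent and disjoint edge pairs and the same asymmetric treatment. Your probability estimates ($\bigO(k/n)$ for adjacent pairs, $\bigO(k/n^2)$ for disjoint pairs) and your diagnosis that the symmetric lemma fails are correct.

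However, there is a genuine gap in the dependency structure. You claim that $A_{e,e'}$ is mutually independent of all $A_{f,f'}$ whose defining edges are vertex-disjoint from $e\cup e'$, ``because it is determined by $\phi|_{V(e)\cup V(e')}$.'' For a uniformly random \emph{injection} this inference is false: events determined by disjoint sets of domain vertices are not independent (already $\{\phi(x)=y\}$ and $\{\phi(x')=y\}$ are mutually exclusive), and this failure is precisely what the Lu--Sz\'ekely framework is designed to circumvent. Lemma~\ref{lem:canon} gives a \emph{negative} dependency graph only for \emph{canonical} events, i.e.\ events that fix the images of specified vertices; your $A_{e,e'}$ is a union of many canonical events (one for each monochromatic target pair), and no dependency or negative-dependency statement for such unions follows from the framework. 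The repair is the one the paper carries out: index the bad events also by the target pair, taking $X(\vec e,\vec f;\vec a,\vec b)$ with $\chi(a)=\chi(b)$ and $\Prob=1/(n)_3$ or $1/(n)_4$, and use the conflict (or intersection) graph as the negative dependency graph. Once you do this, the neighbourhood of an event contains not only the $G$-intersecting events you count but also the $K_n$-intersecting ones (events whose \emph{image} pairs share a vertex); these are entirely absent from your accounting, and they dominate the computation (in the paper's Claim~\ref{cl:rainbow:Q} they form cliques of size up to $\Delta^2n^3k$). With that correction the asymmetric local lemma, or Lemma~\ref{lem:LLLL:new} for better constants, does close the argument for $k\le cn/\Delta^2$.
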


Frieze and Krivelevich also showed that for a very special class of
\emph{spanning} trees~$T$ linearly bounded colourings of~$K_n$ are
$T$-rainbow. They conjectured that this is true for all bounded degree
trees.

In this paper we prove this conjecture. In fact, our result is more general
and shows that the conclusion of the conjecture holds for all graphs with
bounded maximum degree. This also improves on a result in~\cite{BoeKohTar}
which considers bipartite $n$-vertex graphs~$G$ with sublinear bandwidth
and constant maximum degree~$\Delta$, for~$n$ sufficiently large. The
result in~\cite{BoeKohTar} states that for $k\le c(n/\log n)^{1/\Delta}$,
where $c=c(\Delta)>0$, every $k$-bounded colouring of~$K_n$ is $G$-rainbow.

\begin{theorem}
\label{thm:rainbow}
  Let~$G$ be a graph on~$n$ vertices with maximum degree~$\Delta>0$. For $k\le
  n/(51\Delta^2)$ every $k$-bounded edge colouring of~$K_n$ is $G$-rainbow.
\end{theorem}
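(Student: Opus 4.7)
My plan is to mimic the argument used for Theorem~\ref{thm:proper}: take $\phi\colon V(G)\to V(K_n)$ a uniformly random injection, and apply the Lov\'asz local lemma in the Lu--Sz\'ekely form for random injections, strengthened by the Bissacot et al.\ version, to show that with positive probability the image~$\phi(G)$ is rainbow.

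I would index the bad events by tuples $(f,f',e,e')$, where $f=ab$ and $f'=cd$ are distinct edges of~$G$, and $e=xy$ and $e'=uv$ are distinct edges of~$K_n$ with $\chi(e)=\chi(e')$ and with coincidence pattern on $\{x,y,u,v\}$ matching the one on $\{a,b,c,d\}$. The event asserts $\phi(a)=x$, $\phi(b)=y$, $\phi(c)=u$, $\phi(d)=v$; a $\phi$ avoiding all such events is a rainbow copy of~$G$, and each event has probability $(n-s)!/n!$, where $s\in\{3,4\}$ is the number of distinct vertices involved. I would split the bad events according to whether $s=3$ (the two $G$-edges share a vertex, i.e.\ form a cherry) or $s=4$ (vertex-disjoint pairs), since their probabilities and their typical degrees in the dependency graph differ.

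Two bad events are adjacent iff they share a vertex either in~$V(G)$ or in~$V(K_n)$. For a fixed bad event the number of adjacent ones is bounded as follows: for each involved $V(G)$-vertex, at most~$\Delta$ edges of~$G$ through it and $|E(G)|-1\le\Delta n/2$ other edges of~$G$ to pair with; for each involved $V(K_n)$-vertex, $\binom{n}{2}$ choices for one edge and, crucially, at most $k-1$ edges of~$K_n$ sharing its colour by the $k$-boundedness. Multiplying these counts by the event probabilities yields weighted degree estimates that feed into the independent-set sum of the Bissacot et al.\ local lemma, and a routine optimisation then produces the constant $1/(51\Delta^2)$.

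The main technical obstacle is managing the interaction between the two event types ($s=3$ and $s=4$) in the independent-set sum tightly enough to reach the constant~$51$. Here $k$-boundedness is global rather than vertex-local as in Theorem~\ref{thm:proper}, so the $V(K_n)$-side count couples to the $V(G)$-side count less symmetrically than for Shearer's conjecture; keeping the final constant small relies critically on the Bissacot et al.\ strengthening over the classical local lemma. Apart from this bookkeeping, the argument follows the same lines as the proof of Theorem~\ref{thm:proper}.
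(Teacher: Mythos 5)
Your plan follows essentially the same route as the paper: a uniformly random injection, canonical bad events for pairs of same-coloured $K_n$-edges split into intersecting ($s=3$) and disjoint ($s=4$) types, the Lu--Sz\'ekely intersection graph as negative dependency graph, and the Bissacot et al.\ independent-set refinement with neighbourhoods decomposed by shared vertices into cliques mixing the two event types. The only detail worth flagging is that the final numerical verification (the paper takes $\mu_{\mathrm{int}}=(1.4/n)^3$, $\mu_{\mathrm{dis}}=(1.4/n)^4$) needs $n\ge 77$, with smaller $n$ handled trivially since then $k\le 1$.
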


Again, Theorem~\ref{thm:rainbow} can be applied to graphs with growing
maximum degree; it asserts that constantly bounded edge
colourings force rainbow copies of all graphs with maximum degree
$c\sqrt{n}$ for some constant $c=c(k)>0$.

\section{The local lemma  and random injections}

Probabilistic existence proofs often try to 
estimate from above the probability that
any of a set of bad events $\{X_i\}_{i\in[t]}$ occurs, with the goal of showing that
$\Prob(\bigcup_{i\in[t]} X_i)<1$. In
many classical applications the \emph{union bound}
$\Prob(\bigcup_{i\in[t]} X_i)\le\sum_{i\in[t]}\Prob(X_i)$ is used for this
purpose, which is obviously only a good estimate when the bad events are
disjoint or almost disjoint. If the bad events, on the other hand, are
mutually independent then
$\Prob(\bigcap_{i\in[t]}\BAR{X}_i)=\prod_{i\in[t]}(1-\Prob(X_i))$, which
clearly implies that $\Prob(\bigcup_{i\in[t]} X_i)<1$ iff no bad
event occurs with probability~$1$.
The local lemma is a compromise between these two extremes: it takes dependencies
into account but gives a more optimistic upper bound for
$\Prob(\bigcup_{i\in[t]} X_i)$ than the union bound, provided the
dependencies are not too dense.

Before we can formulate this powerful tool we need some definitions.
For a graph $G=(V,E)$ and a vertex $v\in V$ we denote the
\emph{neighbourhood} of~$v$ by $\neighbour_G(v):=\{u\in V\colon uv\in E\}$ and
the \emph{closed neighbourhood}  by
$\cneighbour_G(v):=\neighbour_G(v)\cup\{v\}$. We may also omit the
subscript~$G$. We let $\Delta(G)$ denote the maximum degree of~$G$.

\begin{definition}[dependency graph]
  Let~$\cX$ be a set of events in some probability space. Then~$D=(\cX,E)$
  is a \emph{dependency graph} for~$\cX$ if every event~$X\in\cX$ is
  mutually independent of its non-adjacent events in~$D$, i.e., for every
  set of events~$\cZ\subset\cX \setminus \cneighbour_D(X)$ we have that
  $\Prob(X|\bigcap_{Y\in\cZ}\BAR{Z})=\Prob(X)$. We say that~$D$ is a
  \emph{negative dependency graph} if
  $\Prob(X|\bigcap_{Y\in\cY}\BAR{Z})\le\Prob(X)$ for all such~$X$
  and~$\cZ$.
\end{definition}

The local lemma states that, if a given set~$\cX$ of bad events has a
sufficiently sparse (negative) dependency graph compared to their
probabilities, then with positive probability no bad event occurs.

\begin{lemma}[Lov\'asz local lemma~\cite{ErdLov75,ErdSpe91}]
\label{lem:LLLL}
  Let $\cX=\{X_i\}_{i\in[t]}$ be a set of events with negative dependency
  graph~$D=(\cX,E)$. If
  \begin{enumerate}[label=\rom]
    \item\label{lem:LLLL:maxdeg} we have
      \[\Prob(X_i)<1/e(\Delta(D)+1) \qquad\text{for all $i\in[t]$}\,,\]
    \item\label{lem:LLLL:xi} or if there are numbers $\{x_i\}_{i\in[t]}$ in
      $(0,1)$ such that
      \begin{equation}\label{eq:LLLL}
        \Prob(X_i)\le x_i\prod\nolimits_{X_iX_j\in E}(1-x_j)
       \qquad\text{for all $i\in[t]$}\,,
      \end{equation}
  \end{enumerate}
  then $\Prob(\bigcap_{i\in[t]}\BAR{X}_i)>0$.
\end{lemma}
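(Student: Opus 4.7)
The plan is to prove~(ii) by induction and then reduce~(i) to~(ii) by a canonical choice of the $x_i$. The natural strengthening to prove inductively is the claim that for every $i\in[t]$ and every $\cZ\subset\cX\setminus\{X_i\}$,
\[
  \Prob\Bigl(X_i\,\Big|\,\bigcap_{X_j\in\cZ}\BAR{X}_j\Bigr)\le x_i,
\]
with the induction running on $|\cZ|$. The base case $\cZ=\emptyset$ follows directly from~\eqref{eq:LLLL} together with the fact that each factor $(1-x_j)<1$. Once this strengthened claim is in hand, the lemma itself follows by the chain rule,
\[
  \Prob\Bigl(\bigcap_{i\in[t]}\BAR{X}_i\Bigr)
  =\prod_{i\in[t]}\Prob\Bigl(\BAR{X}_i\,\Big|\,\bigcap_{j<i}\BAR{X}_j\Bigr)
  \ge\prod_{i\in[t]}(1-x_i)>0.
\]

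For the inductive step I would split the conditioning family as $\cZ=\cZ_1\dcup\cZ_2$ with $\cZ_1=\cZ\cap\neighbour_D(X_i)$ and $\cZ_2=\cZ\setminus\cneighbour_D(X_i)$, and then write
\[
  \Prob\Bigl(X_i\,\Big|\,\bigcap_{X_j\in\cZ}\BAR{X}_j\Bigr)
  =\frac{\Prob\bigl(X_i\cap\bigcap_{X_j\in\cZ_1}\BAR{X}_j\,\big|\,\bigcap_{X_k\in\cZ_2}\BAR{X}_k\bigr)}
        {\Prob\bigl(\bigcap_{X_j\in\cZ_1}\BAR{X}_j\,\big|\,\bigcap_{X_k\in\cZ_2}\BAR{X}_k\bigr)}.
\]
The numerator is at most $\Prob(X_i\mid\bigcap_{X_k\in\cZ_2}\BAR{X}_k)$ by monotonicity, and this in turn is at most $\Prob(X_i)\le x_i\prod_{X_iX_j\in E}(1-x_j)$ by the negative dependency property applied to $\cZ_2\subset\cX\setminus\cneighbour_D(X_i)$ together with hypothesis~\eqref{eq:LLLL}. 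For the denominator, enumerating $\cZ_1=\{X_{j_1},\dots,X_{j_r}\}$ and applying the chain rule expresses it as a product of factors $\Prob(\BAR{X}_{j_\ell}\mid\bigcap_{s<\ell}\BAR{X}_{j_s}\cap\bigcap_{X_k\in\cZ_2}\BAR{X}_k)$, each of which is at least $1-x_{j_\ell}$ by the inductive hypothesis (the conditioning family has size strictly less than $|\cZ|$). Hence the denominator is at least $\prod_{X_j\in\cZ_1}(1-x_j)\ge\prod_{X_iX_j\in E}(1-x_j)$, and taking the quotient gives the desired bound~$x_i$.

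To derive~(i) from~(ii) I would set $x_i=1/(\Delta(D)+1)$ for every $i$. The elementary inequality $(1-1/n)^{n-1}>1/e$ applied with $n=\Delta(D)+1$ yields
\[
  x_i\prod_{X_iX_j\in E}(1-x_j)\ge
  \frac{1}{\Delta(D)+1}\Bigl(1-\frac{1}{\Delta(D)+1}\Bigr)^{\Delta(D)}
  >\frac{1}{e(\Delta(D)+1)}>\Prob(X_i),
\]
so the hypothesis of~(i) entails~\eqref{eq:LLLL} and (ii) applies.

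The main subtlety will be the correct use of the \emph{negative} dependency property in the numerator: one must arrange that the monotonicity step $\Prob(X_i\cap A\mid B)\le\Prob(X_i\mid B)$ points in the same direction as the subsequent negative-dependency step, and this works precisely because $\cZ_2$ has been defined to contain only non-neighbours of $X_i$ in~$D$. The remaining bookkeeping---splitting $\cZ$ around $\cneighbour_D(X_i)$ and verifying that each factor appearing in the denominator is conditioned on a strictly smaller family so the induction applies---is then routine.
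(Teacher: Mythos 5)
Your proposal is correct and is exactly the standard inductive proof of the local lemma (as in the cited references \cite{ErdLov75,ErdSpe91}; the paper itself does not reprove the lemma): the strengthened claim $\Prob(X_i\mid\bigcap_{X_j\in\cZ}\BAR{X}_j)\le x_i$, the split of $\cZ$ around $\cneighbour_D(X_i)$, the negative-dependency bound on the numerator, the chain-rule/induction bound on the denominator, and the choice $x_i=1/(\Delta(D)+1)$ to deduce (i). The only point worth making explicit in a write-up is that the conditional probabilities are well defined, i.e.\ $\Prob(\bigcap_{X_j\in\cZ}\BAR{X}_j)>0$, which follows along the way from the same induction.
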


% see also \cite{AlonSpencer}
Originally, the local lemma was proved in~\cite{ErdLov75} with the stronger
assumption that~$D$ is a dependency graph. Erd\H{o}s and Spencer then
observed in~\cite{ErdSpe91} that essentially the same proof applies when
negative dependency graphs are used instead.

A quick calculation shows that Lemma~\ref{lem:LLLL}\ref{lem:LLLL:xi}
implies Lemma~\ref{lem:LLLL}\ref{lem:LLLL:maxdeg} (it suffices to take
$x_i:=1/(\Delta+1)$ for all~$i$).  Moreover, if in Lemma~\ref{lem:LLLL} we change
from the variables $\{x_i\}_{i\in[t]}$ to the variables
$\{\mu_i\}_{i\in[t]}$ defined by $x_i=\mu_i/(1+\mu_i)$, then
Condition~\eqref{eq:LLLL} becomes
\begin{equation}
\label{eq:LLLL:varchange}
  \Prob(X_i)
  \le \frac{\mu_i}{\prod\nolimits_{X_j\in\cneighbour_D(X_i)}(1+\mu_j)}
  =\frac{\mu_i}{\sum\nolimits_{R\subset\cneighbour_D(X_i)}
      \prod\nolimits_{X_j\in R}\mu_j} \;.
\end{equation}
As usual, an empty product has value~$1$.

Using connections between the local lemma, independent set polynomials, and
lattice gas partition functions, in~\cite{BisFerProSco} a version of the local lemma was recently
established which often improves the constants in the upper
bounds on~$\Prob(X_i)$. This lemma states that the
sum over all $R\subset\cneighbour_D(X_i)$ on the right hand side of~\eqref{eq:LLLL:varchange} can be
replaced by a sum over all such~$R$ which form an independent set.

\begin{lemma}[Bissacot, Fern\'andez, Procacci, Scoppola~\cite{BisFerProSco}]
\label{lem:LLLL:new}
  Let $\cX=\{X_i\}_{i\in[t]}$ be a set of events with (negative) dependency
  graph~$D=(\cX,E)$. For each $i\in[t]$ let~$\cR_i$ be the family of all
  subsets of $\cneighbour(X_i)$ which are independent sets. If
  \begin{enumerate}[label=\rom]
    \item\label{lem:LLLL:new:maxdeg} there is a positive number~$\mu$ such
      that
      \begin{equation}\label{eq:LLLL:new:maxdeg}
        \Prob(X_i)\le
        \frac{ \mu }{ 
          \sum_{R\in \cR_i}  \mu^{|R|}
        } 
        \qquad\text{for all $i\in[t]$}\,,
     \end{equation}
     % where in the sum the empty set yields a contribution of~$1$, 
    \item\label{lem:LLLL:new:mu} or if there are positive numbers $\{\mu_i\}_{i\in[t]}$ such that
      \begin{equation}\label{eq:LLLL:new:mu}
        \Prob(X_i)\le 
        \frac{\mu_i}{
          \sum_{R\in \cR_i} \prod_{X_j\in R}\mu_j 
        }
        \qquad\text{for all $i\in[t]$}\,,
      \end{equation}
     % where in the sum the empty set yields a contribution of~$1$, 
  \end{enumerate}
  then $\Prob(\bigcap_{i\in[t]}\BAR{X}_i)>0$.
\end{lemma}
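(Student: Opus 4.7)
The plan is to deduce (i) from (ii) and then prove (ii) by an induction on the size of the conditioning set that refines Spencer's classical proof of Lemma~\ref{lem:LLLL}, with the product $\prod_{X_j\in\cneighbour_D(X_i)}(1+\mu_j)$ in the denominator of~\eqref{eq:LLLL:varchange} replaced throughout by the \emph{independent-set polynomial} of~$D$.

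Deducing (i) from (ii) is immediate: setting $\mu_i:=\mu$ uniformly transforms~\eqref{eq:LLLL:new:mu} into~\eqref{eq:LLLL:new:maxdeg}, so it suffices to prove that~\eqref{eq:LLLL:new:mu} implies $\Prob(\bigcap_i\BAR{X}_i)>0$. To this end, I would introduce for every $S\subseteq\cX$ the strictly positive polynomial
\[
Z_S(\mu)\;:=\;\sum_{R\subseteq S,\ R\text{ independent in }D}\;\prod_{X_j\in R}\mu_j,
\]
which satisfies $Z_\emptyset(\mu)=1$ and the fundamental recurrence
\[
Z_{S\cup\{X_i\}}(\mu)\;=\;Z_S(\mu)\,+\,\mu_i\,Z_{S\setminus\neighbour(X_i)}(\mu)\qquad\text{whenever }X_i\notin S,
\]
so that hypothesis~\eqref{eq:LLLL:new:mu} reads $\Prob(X_i)\le\mu_i/Z_{\cneighbour(X_i)}(\mu)$.

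The heart of the argument is an induction on $|S|$ establishing, for every $i\in[t]$ and every $S\subseteq\cX\setminus\{X_i\}$, an upper bound on $\Prob(X_i\mid\bigcap_{X_j\in S}\BAR{X}_j)$ of the form $\mu_i$ times a ratio of independent-set polynomials of~$D$ that is always strictly less than~$1$. In the inductive step I split $S=S_1\cup S_2$ with $S_1:=S\cap\neighbour(X_i)$ and $S_2:=S\setminus\neighbour(X_i)$ and write
\[
\Prob\Bigl(X_i\,\Big|\,\bigcap_{X_j\in S}\BAR{X}_j\Bigr)
\;=\;\frac{\Prob\bigl(X_i\cap\bigcap_{X_j\in S_1}\BAR{X}_j\,\big|\,\bigcap_{X_j\in S_2}\BAR{X}_j\bigr)}{\Prob\bigl(\bigcap_{X_j\in S_1}\BAR{X}_j\,\big|\,\bigcap_{X_j\in S_2}\BAR{X}_j\bigr)}.
\]
The numerator is handled by dropping the conjunction with the $\BAR{X}_j$ for $j\in S_1$ and invoking the negative-dependency property of~$X_i$ from $S_2\subseteq\cX\setminus\cneighbour_D(X_i)$, giving $\Prob(X_i\mid\bigcap_{X_j\in S_2}\BAR{X}_j)\le\Prob(X_i)\le\mu_i/Z_{\cneighbour(X_i)}(\mu)$. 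The denominator is expanded via the chain rule along an ordering of $S_1$ as a product of one-step conditional probabilities; each of its factors is bounded below by applying the inductive hypothesis and then the $Z$-recurrence, so that the product collapses into a single ratio of independent-set polynomials. Once the inductive claim is in hand, telescoping $\Prob(\bigcap_i\BAR{X}_i)=\prod_i\Prob(\BAR{X}_i\mid\bigcap_{j<i}\BAR{X}_j)$ yields a strictly positive lower bound on the intersection probability.

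The main obstacle, and the source of the improvement over the classical bound~\eqref{eq:LLLL:varchange}, will be pinning down the correct form of the inductive bound and verifying that the denominator telescoping actually closes at each step. The analogous computation in Spencer's proof produces the full subset sum $\prod_{X_j\in\cneighbour(X_i)}(1+\mu_j)=\sum_{R\subseteq\cneighbour(X_i)}\prod_{X_j\in R}\mu_j$; the restriction of the hypothesis~\eqref{eq:LLLL:new:mu} to \emph{independent}~$R$ is precisely what allows the recurrence for~$Z$ to be iterated without generating non-independent cross terms, and this is how the sharper independent-set polynomial $Z_{\cneighbour(X_i)}(\mu)$ takes the place of the full subset sum in the conclusion.
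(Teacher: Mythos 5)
First, a point of comparison: the paper does not prove Lemma~\ref{lem:LLLL:new} at all --- it is quoted from~\cite{BisFerProSco}, where it is obtained by relating the local lemma to the convergence of the cluster expansion of a hard-core lattice gas (equivalently, by verifying Shearer's condition). Your direct Spencer-style induction is therefore a genuinely different route. Such a proof does exist, and your reduction of \ref{lem:LLLL:new:maxdeg} to \ref{lem:LLLL:new:mu}, your recurrence $Z_{S\cup\{X_i\}}=Z_S+\mu_i Z_{S\setminus\neighbour(X_i)}$, and the split $S=S_1\cup S_2$ with the negative-dependency bound on the numerator are all correct ingredients of it.

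The gap sits exactly where you flag ``the main obstacle'': the plan never states the inductive bound, and the candidate that your telescoping description points to is false. The bound for which $1-\Prob(X_i\mid\cdot)$ telescopes to $1/Z_{\cX}$ is $\Prob\bigl(X_i\mid\bigcap_{X_j\in S}\BAR{X}_j\bigr)\le\mu_i Z_{S\setminus\neighbour(X_i)}(\mu)/Z_{S\cup\{X_i\}}(\mu)$, and this fails already for two mutually exclusive events joined by a single edge of $D$ with $\Prob(X_1)=\Prob(X_2)=\mu/(1+2\mu)$ (which meets~\eqref{eq:LLLL:new:maxdeg} with equality): there $\Prob(X_1\mid\BAR{X}_2)=\mu/(1+\mu)$, whereas the claimed bound is $\mu/(1+2\mu)$. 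The inductive statement that does close is
\[
\Prob\Bigl(X_i\,\Big|\,\bigcap\nolimits_{X_j\in S}\BAR{X}_j\Bigr)\;\le\;\frac{\mu_i}{Z_{\cneighbour(X_i)\setminus S}(\mu)}\,,
\]
whose base case is exactly hypothesis~\eqref{eq:LLLL:new:mu}; verifying the inductive step requires one ingredient absent from your plan, namely the submultiplicativity $Z_{A\cup B}(\mu)\le Z_A(\mu)\,Z_B(\mu)$ of the independence polynomial. It is used, together with the $Z$-recurrence, to bound each chain-rule factor $1-\mu_{j_l}/Z_{\cneighbour(X_{j_l})\setminus T_l}$ from below by $Z_{U\setminus\{X_{j_l}\}}/Z_{U}$ with $U=\cneighbour(X_i)\setminus\{X_{j_1},\dots,X_{j_{l-1}}\}$, so that the denominator collapses to $Z_{\cneighbour(X_i)\setminus S}/Z_{\cneighbour(X_i)}$. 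With that statement the final product $\prod_i\bigl(1-\mu_i/Z_{\cneighbour(X_i)\setminus\{X_1,\dots,X_{i-1}\}}\bigr)$ is positive term by term and the lemma follows. In short: the strategy is viable, but the decisive inductive statement and the submultiplicativity step are missing, and the naive telescoping version of the claim is simply not true.
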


A \emph{clique} in a graph~$G$ is the vertex set of a complete subgraph
of~$G$.  Note that Lemma~\ref{lem:LLLL:new} provides better bounds than
Lemma~\ref{lem:LLLL} when the neighbourhoods of events in the independence
graph are `dense', for instance, when they can be written as the union of
few cliques. Hence, in an an application of this lemma we shall aim at
decomposing the closed neighbourhood of each vertex into cliques. We will
rely on the following straightforward observation.

\begin{remark*}
  Assume we apply Lemma~\ref{lem:LLLL:new} to a negative dependency
  graph~$D=(\cX,E)$ which satisfies the following condition for some
  integers~$\ell$ and $\{q_j\}_{j\in[\ell]}$. For each vertex~$X_i$ we can
  write $\cneighbour_D(X_i)=\bigcup_{j\in[\ell]} Q_{i,j}$ where
  $|Q_{i,j}|\le q_j$ and $Q_{i,j}$ is a clique in~$D$.
  % , such that each edge in $E$ lies within some $Q_{i,j}$. 
  (Note that we do not require $Q_{i,j}\cap Q_{i,j'}=\emptyset$.) Then, if
  we replace Condition~\eqref{eq:LLLL:new:maxdeg} by
  \begin{reequation}{\ref{eq:LLLL:new:maxdeg}}
    \label{eq:LLLL:new:maxdeg'}
    \Prob(X_i)\le
    \frac{ \mu }{ \prod_{j\in[\ell]} (1+\mu q_j) } 
    \qquad\text{for all $i\in[t]$}\,,   
  \end{reequation}
  then the conclusion of Lemma~\ref{lem:LLLL:new} remains valid.

  Alternatively, assume that we are in the following
  (somewhat special) situation. There are two different types of vertices
  in~$D'$, that is, $\cX=\cX_1\dcup\cX_2$. With each type~$s\in[2]$
  of vertices we associate an integer~$\ell_s$.  Moreover, suppose that for each
  $s\in[2]$ and each $i\in[t]$ with $X_i\in\cX_s$ we can write
  $\cneighbour_D(X_i)=\bigcup_{j\in[\ell_s]} \big( Q_{i,j}^{(1)} \dcup
  Q_{i,j}^{(2)} \big)$ such that for $s'\in[2]$ we have
  $Q_{i,j}^{(s')}\subset\cX_{s'}$ and $|Q_{i,j}^{(s')}|\le q_{j,s'}$ for
  some $\{q_{j,s'}\}_{j\in[\ell_{s'}]}$.
  Assume in addition that $D\big[ Q_{i,j}^{(1)} \cup Q_{i,j}^{(2)} \big]$ is a clique.
  % such that each edge in $E$ lies within some $Q_{i,j}$, 
  Then we can replace Condition~\eqref{eq:LLLL:new:mu} by
  requiring that there are 
  positive numbers $\mu_1$ and $\mu_2$ such that for each $s\in[2]$ and
  each $X_i\in\cX_s$ we have
  \begin{reequation}{\ref{eq:LLLL:new:mu}}
  \label{eq:LLLL:new:mu'}
   \Prob(X_i)\le
   \frac{ \mu_s }{ \prod_{j\in[\ell_s]}\big(1+\mu_1 q_{j,1}+\mu_2 q_{j,2}\big) } 
   \,.
 \end{reequation}
\end{remark*}

\medskip

Next, we shall discuss how we construct negative dependency graphs in our
applications of the local lemma. For this purpose we use a framework developed
by Lu and Sz{\'e}kely~\cite{LuSze07} suited for the case when the
random experiment under consideration chooses an injection between two sets
uniformly at random.

Let~$A$ and~$B$ be two finite sets. We denote by $\Inj(A,B)$ the set of
injections from~$A$ to~$B$. In the following we consider the probability
space generated by drawing injections uniformly at random from
$\Inj(A,B)$. We shall use the following distinguished type of
events.

\begin{definition}[canonical events, conflicts]
  A \emph{canonical event~$X=X(A',B',\pi)$} for $\Inj(A,B)$ is
  determined by two sets $A'\subset A$, $B'\subset B$ and a bijection
  $\pi\colon A'\to B'$, and is defined as
  $X:=\{\sigma\in\Inj(A,B)\colon\sigma|_{A'}=\pi|_{A'}\}$.  We say that two
  canonical events $X(A'_1,B'_1,\pi_1)$ and $X(A'_2,B'_2,\pi_2)$
  \emph{conflict} if there is no injection in $\Inj(A,B)$ that extends
  both $\pi_1$ and $\pi_2$.
\end{definition}

Lu and Sz{\'e}kely showed that for canonical events there is
a simple way of constructing a negative dependency graph: we only have to
insert edges between conflicting events.

\begin{lemma}[Lu and Sz{\'e}kely~\cite{LuSze07}]
\label{lem:canon}
  Let $\cX=\{X_i\}_{i\in[t]}$ be a set of canonical events for
  $\Inj(A,B)$. Then the graph $D=(\cX,E)$ with
  \[E:=\{X_iX_j\colon\text{$X_i$ and $X_j$ conflict}\}\]
  is a negative dependency graph.
\end{lemma}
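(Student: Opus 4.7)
The plan is to verify the negative dependency graph definition directly: for a canonical event $X_i=X(A'_i,B'_i,\pi_i)$ and a set $\cZ\subset\cX\setminus\cneighbour_D(X_i)$ of canonical events each non-conflicting with $X_i$, I need $\Prob(X_i\mid\bigcap_{Y\in\cZ}\BAR{Y})\le\Prob(X_i)$. After rearrangement this is equivalent to the positive-correlation statement
\[
  \Prob\Big(\bigcup_{Y\in\cZ} Y \;\Big|\; X_i\Big)
  \;\ge\;
  \Prob\Big(\bigcup_{Y\in\cZ} Y\Big),
\]
which I will establish by constructing a deterministic ``imposition'' map $\Phi\colon\Inj(A,B)\to X_i$ with two properties. \emph{(P1) Balance:} every $\tau^*\in X_i$ has exactly $|\Inj(A,B)|/|X_i|$ preimages under $\Phi$. \emph{(P2) Monotonicity over $\cZ$:} for every $Y\in\cZ$, $\sigma\in Y$ implies $\Phi(\sigma)\in Y$. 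Writing $E=\bigcup_{Y\in\cZ} Y$, property (P2) gives $E\subset\Phi^{-1}(E\cap X_i)$, and combined with (P1) this yields $|E|\le(|\Inj(A,B)|/|X_i|)\cdot|E\cap X_i|$, which is exactly the inequality above.

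To build $\Phi$ I fix a linear order on $B$ and, for $\sigma\in\Inj(A,B)$, define a permutation $\tau_\sigma$ of $B$ by declaring $\tau_\sigma(\sigma(a))=\pi_i(a)$ for each $a\in A'_i$ (which determines $\tau_\sigma$ on $\sigma(A'_i)$ as a bijection onto $B'_i$), letting $\tau_\sigma$ be the identity on $B\setminus(\sigma(A'_i)\cup B'_i)$, and completing the permutation by the order-preserving bijection from $B'_i\setminus\sigma(A'_i)$ onto $\sigma(A'_i)\setminus B'_i$ (both sets having the same cardinality). Set $\Phi(\sigma):=\tau_\sigma\circ\sigma$; by construction $\Phi(\sigma)(a)=\pi_i(a)$ for $a\in A'_i$, so $\Phi(\sigma)\in X_i$. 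Since $\tau_\sigma$ depends only on $\sigma|_{A'_i}$, property (P1) is easy: given $\tau^*\in X_i$, the preimages of $\tau^*$ are parametrised by arbitrary injections $\alpha\colon A'_i\to B$ (setting $\sigma|_{A'_i}=\alpha$ and then $\sigma|_{A\setminus A'_i}=\tau_\alpha^{-1}\circ\tau^*|_{A\setminus A'_i}$), and there are $|\Inj(A'_i,B)|=|\Inj(A,B)|/|X_i|$ such choices.

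The step I expect to be the main obstacle is (P2), which is also where the non-conflict hypothesis does all the work. Fix $Y=X(A'_Y,B'_Y,\pi_Y)\in\cZ$ and suppose $\sigma|_{A'_Y}=\pi_Y$; I want $\Phi(\sigma)|_{A'_Y}=\pi_Y$. On $A'_Y\cap A'_i$ the non-conflict hypothesis forces $\pi_i$ and $\pi_Y$ to coincide, so $\Phi(\sigma)(a)=\pi_i(a)=\pi_Y(a)$. For $a\in A'_Y\setminus A'_i$ one has $\Phi(\sigma)(a)=\tau_\sigma(\pi_Y(a))$, so I need $\tau_\sigma$ to fix $\pi_Y(a)$. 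Non-conflict rules out $\pi_Y(a)\in B'_i$ (otherwise $\pi_i\cup\pi_Y$ would map two distinct elements of $A'_i\cup A'_Y$ to the same point), and the injectivity of $\sigma$ combined with $a\notin A'_i$ rules out $\pi_Y(a)=\sigma(a)\in\sigma(A'_i)$. Hence $\pi_Y(a)\in B\setminus(\sigma(A'_i)\cup B'_i)$, which is precisely the set on which $\tau_\sigma$ was set to the identity. The design of $\tau_\sigma$---in particular the decision to leave $B\setminus(\sigma(A'_i)\cup B'_i)$ pointwise fixed---is chosen precisely so that this step goes through.
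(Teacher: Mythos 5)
Your proof is correct. Note first that the paper itself offers no proof of this lemma: it is imported verbatim from Lu and Sz\'ekely, so there is no in-paper argument to compare against. Your reduction of the negative-dependency inequality to $\Prob(\bigcup_{Y\in\cZ}Y\mid X_i)\ge\Prob(\bigcup_{Y\in\cZ}Y)$ is the standard and correct rearrangement, and the ``imposition'' map $\Phi=\tau_\sigma\circ\sigma$ is a clean switching argument. I checked the two places where such arguments usually break. For (P1): since $\tau_\sigma$ depends only on $\sigma|_{A_i'}$ and $\tau_\alpha(\alpha(a))=\pi_i(a)=\tau^*(a)$ for $a\in A_i'$, every preimage of $\tau^*$ is exactly $\tau_\alpha^{-1}\circ\tau^*$ for some injection $\alpha\colon A_i'\to B$, and each such map is indeed injective; the count $|\Inj(A_i',B)|=|\Inj(A,B)|/|X_i|$ is right. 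For (P2): the partition of $A'_Y$ into $A'_Y\cap A'_i$ and $A'_Y\setminus A'_i$ is handled correctly, and the two exclusions ($\pi_Y(a)\notin B_i'$ by non-conflict, $\pi_Y(a)=\sigma(a)\notin\sigma(A_i')$ by injectivity of $\sigma$) are exactly what is needed to land in the set where $\tau_\sigma$ is the identity. This is essentially the argument of Lu and Sz\'ekely (their proof is likewise a balance-plus-monotonicity coupling on the space of injections), so you have reconstructed the cited proof rather than found a new route.
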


We also call the graph~$D$ constructed in this lemma \emph{canonical
  dependency graph} for~$\cX$.
For a set~$\cX$ of canonical events in $\Inj(A,B)$, let the
\emph{intersection graph} for $\cX$ be the graph $D':=(\cX,E)$ with
precisely those edges $X_1X_2$ between events $X_1=X_1(A_1,B_1,\pi_1)$ and
$X_2=X_2(A_2,B_2,\pi_2)$ with $(A_1\cap A_2)\cup(B_1\cap
B_2)\neq\emptyset$. Observe that $D'$ is a supergraph of the canonical
dependency graph for~$\cX$ and hence $D'$ is a negative dependency graph.

\section{Proofs}
\label{sec:proofs}

For the proofs of Theorems~\ref{thm:proper} and~\ref{thm:rainbow} we
combine the local lemma (Lemma~\ref{lem:LLLL:new}) with Lemma~\ref{lem:canon}.
For this purpose, given a graph~$G$ on~$n$ vertices, we let
$\cJ=\Inj\big(V(G),V(K_n)\big)$ be the set of embeddings of~$G$ into~$K_n$
(viewed as injections). Without loss of generality we assume $V(G)=V(K_n)=[n]$.

We will use the following canonical events for~$\cJ$.  For an
edge~$e=\{e_1,e_2\}$ we denote an ordered pair formed by its endvertices by
$\vec e$.  Now, let $\vec e=(e_1,e_2)$ and $\vec f=(f_1,f_2)$ be distinct
ordered pairs in~$V(G)$ such that~$e$ and~$f$ are (distinct) edges
of~$G$. In addition we require that these pairs are chosen such that
$e_1<e_2$ and $f_1<f_2$ and~$\vec e$ is lexicographically smaller
than~$\vec f$, that is, either $e_1<f_1$, or $e_1=f_1$ and $e_2<f_2$. These
inequalities merely make the choice of $e_1,e_2,f_1,f_2$ unique, given the
set of~$3$ or~$4$ vertices in the two edges of~$G$. We also say that $\vec
e$ and $\vec f$ are \emph{lexicographically ordered}.  Let~$\vec
a=(a_1,a_2)$ and~$\vec b=(b_1,b_2)$ be distinct pairs in~$V(K_n)$.
% Somewhat imprecisely, we also call~$e$ and~$f$ edges of~$G$, and~$a$
% and~$b$ edges of~$K_n$, and consider them as sets when this is convenient.

We denote by $X(\vec e, \vec f; \vec a, \vec b)$ the event containing all
embeddings of~$G$ into~$K_n$ which map~$\vec e$ to~$\vec a$, and~$\vec f$
to~$\vec b$, that is, $X(\vec e, \vec f; \vec a, \vec b)$ is the canonical
event $X(e\cup f, a\cup b,\pi)$ with $\pi(e_1)=a_1$, $\pi(e_2)=a_2$,
$\pi(f_1)=b_1$, and $\pi(f_2)=b_2$.  It goes without saying that we only
consider events $X(\vec e, \vec f; \vec a, \vec b)$ for which such an
injection $\pi$ exists.  We say that $X(\vec e, \vec f; \vec a, \vec b)$ is
of \emph{disjoint type} if~$e$ and~$f$ (and hence also~$a$ and~$b$) are
disjoint, otherwise we say that $X(\vec e, \vec f; \vec a, \vec b)$ is of
\emph{intersecting type}. We have
\begin{equation}
\label{eq:canon:P}
\Prob\big(X(\vec e, \vec f; \vec a, \vec b)\big)=\begin{cases}
  1\,/\,(n)_3 &
  \quad\text{if $X(\vec e, \vec f; \vec a, \vec b)$ is of intersecting type,} \\
  1\,/\,(n)_4 &
 \quad\text{otherwise,}
\end{cases}
\end{equation}
where $(n)_k=n!/(n-k)!$ is the falling factorial.

Observe that two canonical events $X(\vec e, \vec f; \vec a, \vec b)$ and
$X(\vec e\,', \vec f'; \vec a', \vec b')$ with bijections $\pi$ and $\pi'$
conflict only if one of the following cases occurs. Either there is a
vertex in $(e\cup f)\cap(e'\cup f')$ for which $\pi$ and $\pi'$ are
inconsistent, or there is a vertex in $(a\cup b)\cap(a'\cup b')$ for which
$\pi$ and $\pi'$ are inconsistent. In the first case $e\cup f$ and $e'\cup
f'$ share some element and in the second case $a\cup b$ and $a'\cup b'$
share some element. This motivates the following definitions. We say that
the events $X(\vec e, \vec f; \vec a, \vec b)$ and $X(\vec e\,', \vec f';
\vec a', \vec b')$ are \emph{$G$-intersecting} if $(e\cup f)\cap(e'\cup
f')\neq\emptyset$ and that they are \emph{$K_n$-intersecting} if $(a\cup
b)\cap(a'\cup b')\neq\emptyset$.

\subsection{Properly coloured subgraphs}

In this section we prove Theorem~\ref{thm:proper}. For this purpose we
take a random embedding of~$G$ into~$K_n$. A `bad event' occurs if two
adjacent edges~$e$ and~$f$ of~$G$ are mapped onto two (adjacent) edges
of~$K_n$ with the same colour. We will use the local lemma to show that
the probability that none of those bad events occurs is positive.

\begin{proof}[Proof of Theorem~\ref{thm:proper}]
  % Observe that for $n<10$ the condition of Theorem~\ref{thm:rainbow} cannot
  % be satisfied. Hence we assume $n\ge 10$.
  Given~$G$ and a locally $k$-bounded edge colouring~$\chi$ of~$K_n$, we
  consider a random embedding~$\sigma\colon V(G)\to V(K_n)$ of~$G$
  into~$K_n$ (that is, a random injection from the set~$\cJ$ defined above)
  and show that with positive probability~$\sigma$ has the
  desired property. 

  Let the canonical events $X(\vec e, \vec f; \vec a, \vec b)$ with $e,f\in
  E(G)$ and $a,b\in E(K_n)$ of~$\cJ$ be as defined above. Let the set of
  bad events~$\cX$ be the set of events~$X(\vec e, \vec f; \vec a, \vec b)$
  of intersecting type such that $\chi(a)=\chi(b)$ and let~$D'$ be the
  intersection graph for~$\cX$.  Observe that, if no bad event occurs,
  then~$\sigma$ provides a properly coloured copy of~$G$.  Our goal is to
  apply version~\ref{lem:LLLL:new:maxdeg} of Lemma~\ref{lem:LLLL:new} to
  show that $\Prob(\bigcap_{X\in\cX}\BAR{X})>0$.
  For this purpose it suffices to check
  Condition~\eqref{eq:LLLL:new:maxdeg'}. Therefore we will next analyse the
  closed neighbourhood $\cneighbour_{D'}(X)$ of events~$X$ in~$D'$.

  Let $X=X(\vec e, \vec f; \vec a, \vec b) \in\cX$ be fixed. 
  Let~$S_G(X)$ be the set of events in~$\cX$ that are
  $G$-intersecting with~$X$ and~$S_{K_n}(X)$ be the set of events
  in~$\cX$ that are $K_n$-intersecting with~$X$. Then
  $\cneighbour_D(X)=\{X\}\dcup S_{G}(X)\dcup S_{K_n}(X)$.

  Observe that $e$ and $f$ form a cherry in~$G$. Call its three vertices
  $x,y,z$.  Assume that $X(\vec e\,', \vec f'; \vec a', \vec b')\in\cX$ is
  $G$-intersecting with $X$. Then the cherry $e',f'$ contains some vertex
  $x'\in\{x,y,z\}$. Recall that~$x'$ is contained in at most~$q$ cherries
  of~$G$. Hence, given~$x'$ there are at most~$q$ choices for $e',f'$. Once
  $\{e',f'\}$ has been fixed, the vertices of $\vec a'$ can be chosen in
  $(n)_2$ ways, and for fixed $\vec a'$ there are at most~$k$ choices for
  the third vertex in~$a'\cup b'$ since $\chi(a')=\chi(b')$ and $\chi$ is
  $k$-bounded.  It follows that $\{X\}\dcup S_G$ is the union of three
  (overlapping) cliques in~$D'$ of order at most $q\cdot (n)_2k$ each.

  Similarly, $a$ and $b$ form a cherry. Call its vertices $u,v,w$. If
  $X(\vec e\,', \vec f'; \vec a', \vec b')\in\cX$ is $K_n$-intersecting
  with $X(\vec e, \vec f; \vec a, \vec b)$, then the cherry $a',b'$
  contains some vertex $u'\in\{u,v,w\}$.  The number of monochromatic
  cherries in $K_n$ containing~$u'$ as an end point is at most $(n-1)k$ and
  of those containing~$u'$ as the middle point is at most
  $\frac12(n-1)k$. Moreover, there are~$2$ injections from the vertices of
  a cherry in~$G$ to such a monochromatic cherry and there are at most $pn$
  cherries in~$G$ by hypothesis. We infer that $S_{K_n}(X)$ can be written
  as the union of three (overlapping) cliques in~$D'$ of order at most
  $2\big((n-1)k+\frac12(n-1)k\big)\cdot pn=3p\cdot (n)_2k$ each.

  In conclusion $\cneighbour_{D'}\big(X(\vec e, \vec f; \vec a, \vec
  b)\big)$ is the union of three
  cliques of order at most $q(n)_2k$ and three cliques of order at most
  $3p(n)_2k$. For checking
  Condition~\eqref{eq:LLLL:new:maxdeg'}, it is enough to show that there
  is a positive number~$\mu$ such that
  \begin{equation*}
  \Prob\big(X(\vec e, \vec f; \vec a, \vec b)\big)
    \eqByRef{eq:canon:P} \frac{1}{(n)_3}
    \le \frac{\mu}{\big(1+q\cdot (n)_2k\mu\big)^3\big(1+3p\cdot (n)_2k\mu\big)^3} \;.
  \end{equation*}
  We claim that for $\mu:=\big(\frac65\big)^6/(n)_3$ this inequality is satisfied. Indeed,
  since $k\le \frac13\big(\frac56\big)^5(n-2)/(q+3p)$ we have
  $k\mu\le\frac25/\big((n)_2(q+3p)\big)$. This implies
  \begin{multline*}
    \big(1+q\cdot (n)_2k\mu\big)^3\big(1+3p\cdot (n)_2k\mu\big)^3
    \le \Bigg(
        \bigg( 1+\frac25\cdot\frac{q}{q+3p} \bigg)
        \bigg( 1+\frac25\cdot\frac{3p}{q+3p} \bigg)
      \Bigg)^3 \\
    = \bigg( 1+\frac25+\frac4{25}\cdot\frac{3qp}{(q+3p)^2}\bigg)^3
    \le \bigg( 1+\frac25+\frac4{25}\cdot\frac14\bigg)^3
    =\bigg(\frac65\bigg)^6 \,,
  \end{multline*}
  where in the last inequality we used that $st/(s+t)^2\le 1/4$
  for all reals~$s$ and~$t$ with $s+t\neq 0$.
  Therefore we obtain
  \begin{equation*}
    \frac{\mu}{\big(1+q(n)_2k\mu\big)^3\big(1+3p(n)_2k\mu\big)^3} 
    \ge \frac{\big(\frac65\big)^6}{\big(\frac65\big)^6\cdot(n)_3}
    = \Prob\big(X(\vec e, \vec f; \vec a, \vec b)\big)\;,
  \end{equation*}
  as required.
\end{proof}

  % We claim that for $\mu:=3/(n)_3$ this inequality is satisfied. Indeed,
  % since $k\le (n-2)/8.2(q+3p)$ we have
  % \begin{equation*}
  %   \big(1+q(n)_2k\mu\big)^3\big(1+3p(n)_2k\mu\big)^3
  %   \le \exp\big(3(q+3p)(n)_2k\mu\big) 
  %   \le \exp(3\cdot 3/ 8.2)
  %   < 3
  % \end{equation*}
  % and therefore
  % \begin{equation*}
  %   \frac{\mu}{\big(1+q(n)_2k\mu\big)^3\big(1+3p(n)_2k\mu\big)^3} 
  %   > \frac{3}{3(n)_3}
  %   = \Prob\big(X(\vec e, \vec f; \vec a, \vec b)\big)\;,
  % \end{equation*}
  % as required.
 
  % {\red Put now $k=c(n-2)$ and $\mu:=\frac{\nu}{c(q+3p)(n)_3}$. Then inequality  above is satisfied for $c$ such that
  %   \begin{equation*}
  %     c\;\;
  %     {\red \le} \; {1\over q+3p}\;\frac{\nu}{\big(1+\nu+ {3pq\over (q+3p)^2}\nu^2\big)^3} \;
  %   \end{equation*}
  %   Observing now that for any positive $x,y$ we have that $xy/(x+y)^2\le 1/4$, inequality above is satisfied if
  %   \begin{equation*}
  %     c (q+3p)\;\;
  %     {\red \le} \; \;\frac{\nu}{\big(1+\nu+ {\nu^2\over 4}\big)^3} \;\le \;\frac{2/5}{\big(1+{2\over 5}+ {1\over 25}\big)^3} = {1\over 7,46496}
  %   \end{equation*}
  %   Therefore, for $k={n-2\over 7,46496(q+3p)}$the   Condition~\eqref{eq:LLLL:new:maxdeg'} is satisfied.
  % }

\subsection{Rainbow subgraphs}

In this section we prove Theorem~\ref{thm:rainbow}. Its proof follows the
strategy of the proof for Theorem~\ref{thm:proper}. The major difference is
that now we have to consider both, canonical events of intersecting and of
disjoint type. This is why we will consider bad events with very different
probabilities and therefore we will apply version~\ref{lem:LLLL:new:mu} of the
local lemma in the form of Lemma~\ref{lem:LLLL:new}.

\begin{proof}[Proof of Theorem~\ref{thm:rainbow}]
  Observe that for $n<77$ the condition of Theorem~\ref{thm:rainbow}
  implies~$k\le1$ and therefore in this case the theorem holds
  trivially. Hence we assume $n\ge 77$.

  Given~$G$ and a $k$-bounded edge colouring~$\chi$ of~$K_n$,
  let~$\cJ$ and its canonical events $X(\vec e, \vec f; \vec a, \vec b)$ be as defined at the
  beginning of Section~\ref{sec:proofs}, and let
  $\sigma\colon V(G)\to V(K_n)$ be a random injection from~$\cJ$.
  Let~$m$ denote the number of edges in~$G$.
  
  Let the set of bad events~$\cX$ be the set of canonical
  events $X(\vec e, \vec f; \vec a, \vec b)$ (of intersecting and disjoint type) such that
  $\chi(a)=\chi(b)$ and let~$D'$ be the intersection graph
  for~$\cX$. Again, if no bad event occurs, then~$\sigma$ gives a
  rainbow copy of~$G$. Hence, if we can apply version~\ref{lem:LLLL:new:mu} of
  Lemma~\ref{lem:LLLL:new} to show that $\Prob(\bigcap_{B\in\cX}\BAR{B})>0$, we
  are done.

  For Lemma~\ref{lem:LLLL:new}\ref{lem:LLLL:new:mu} it suffices to define for every
  bad event $B\in\cX$ a positive number~$\mu_B$ such that~\eqref{eq:LLLL:new:mu'}
  is satisfied . In order to find out how we should set these numbers we
  will investigate the neighbourhood of~$B$ in~$D'$, write it as a union of
  cliques, and derive bounds on their sizes. 

  Let $X=X(\vec e, \vec f; \vec a, \vec b)$ be an arbitrary bad event.
  If~$X$ is of intersecting type then
  we have $e\cup f=\{x_1,x_2,x_3\}$ and $a\cup b=\{u_1,u_2,u_3\}$;
  otherwise we have $e\cup f=\{x_1,x_2,x_3,x_4\}$ and $a\cup
  b=\{u_1,u_2,u_3,u_4\}$.
  In either case, for~$\ell\in[4]$,
  let $Q_G^{(\ell)}$ denote the set of neighbours of~$X$ in~$D'$ which
  are $G$-intersecting with~$X$ and contain~$x_\ell$. Analogously, 
  let~$Q_{K_n}^{(\ell)}$ denote the set of neighbours of~$X$ in~$D'$ which
  are $K_n$-intersecting and contain~$u_\ell$.
  Hence we have
  \begin{equation*}
    \neighbour_{D'}(X)=
    \begin{cases}
      \bigcup\nolimits_{\ell\in[3]} 
      \Big(Q_G^{(\ell)}\cup Q_{K_n}^{(\ell)} \Big) &
    \quad\text{if $X$ is of intersecting type}\,, \\[2mm]
      \bigcup\nolimits_{\ell\in[4]} 
      \Big(Q_G^{(\ell)}\cup Q_{K_n}^{(\ell)} \Big) &
     \quad\text{if $X$ is of disjoint type} \,.
    \end{cases}
  \end{equation*}
  Moreover, we write $Q_{\its(G)}^{(\ell)}$ for the set of those events
  in~$Q_G^{(\ell)}$ that are of intersecting type and
  $Q_{\dis(G)}^{(\ell)}$ for the set of those of disjoint type;
  therefore $Q_{G}^{(\ell)}=Q_{\its(G)}^{(\ell)} \dcup
  Q_{\dis(G)}^{(\ell)}$.  Analogously
  $Q_{K_n}^{(\ell)}=Q_{\its(K_n)}^{(\ell)} \dcup Q_{\dis(K_n)}^{(\ell)}$.

  \begin{AuxClaim}\label{cl:rainbow:Q}
    For each~$\ell$ the sets $\{X\}\cup Q_G^{(\ell)}$ and
    $Q_{K_n}^{(\ell)}$ are cliques in~$D'$ and we have
    \begin{equation}\label{eq:rainbow:Q}
      \begin{aligned}
        \big| \{X\}\cup Q_{\its(G)}^{(\ell)} \big| &\le \gamma_{\its} :=\tfrac32\Delta^2n^2 k \,, &\qquad
        \big| \{X\}\cup Q_{\dis(G)}^{(\ell)} \big| &\le \gamma_{\dis} :=\Delta^2 n^3 k \,, \\
        \big| Q_{\its(K_n)}^{(\ell)} \big| &\le \kappa_{\its} := \Delta^2 n^2 k\,, &\qquad
        \big| Q_{\dis(K_n)}^{(\ell)} \big| &\le \kappa_{\dis} :=\Delta^2 n^3 k \,.
      \end{aligned}
    \end{equation}
  \end{AuxClaim}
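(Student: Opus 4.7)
The clique part of the claim will be immediate from the definition of $D'$: any two events in $\{X\}\cup Q_G^{(\ell)}$ both contain $x_\ell$ in their underlying $G$-vertex set $e'\cup f'$, so they are $G$-intersecting and hence adjacent in $D'$. The analogous argument with $u_\ell$ and the $K_n$-side will handle $Q_{K_n}^{(\ell)}$.

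For each of the four size bounds the plan is to apply the same two-step count: first, pick the pair $\{e',f'\}$ of edges of $G$ (which determines $\vec e',\vec f'$ via lex order) subject to the appropriate $G$-side constraint (whether or not the pair must contain $x_\ell$); second, pick the bad ordered image $(\vec a',\vec b')$ with $\chi(a')=\chi(b')$ subject to the appropriate $K_n$-side constraint (whether or not it must contain $u_\ell$). For the two $G$-variants the constraint sits on the first step and the second is unconstrained; for the two $K_n$-variants the first step is unconstrained and the constraint sits on the second.

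For step~(i) I will use $\Delta(G)\le\Delta$ to obtain: at most $\binom{\Delta}{2}+\Delta(\Delta-1)\le\frac{3}{2}\Delta^2$ cherries in $G$ through $x_\ell$ (case analysis on whether $x_\ell$ is the middle or an endpoint); at most $\Delta\cdot|E(G)|\le\frac{1}{2}\Delta^2 n$ disjoint edge pairs of $G$ containing $x_\ell$; at most $n\binom{\Delta}{2}\le\frac{1}{2}\Delta^2 n$ cherries in $G$ in total; and at most $\binom{|E(G)|}{2}\le\frac{1}{8}\Delta^2 n^2$ disjoint edge pairs in $G$ in total. For step~(ii) I will use $k$-boundedness of $\chi$: a bad intersecting image is specified by the image $v$ of the middle vertex, the image $w_1$ of one outer vertex (which fixes a colour $c=\chi(vw_1)$), and the image $w_2$ of the other outer vertex, for which there are at most $k-1$ choices since $\chi$ is $k$-bounded, yielding $\le n(n-1)(k-1)\le n^2 k$ such images; a bad disjoint image is specified by $\vec a'$ (at most $(n)_2$ choices) and an ordered edge of colour $\chi(a')$ disjoint from $a'$ (at most $2(k-1)$ by $k$-boundedness), giving $\le 2n^2 k$; forcing the image to additionally contain $u_\ell$ cuts each of these by a factor of order $n$, via a short case analysis on which of the $3$ or $4$ vertex slots equals $u_\ell$. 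Multiplying (i) by (ii) in each of the four cases produces the four claimed bounds $\gamma_{\its}$, $\gamma_{\dis}$, $\kappa_{\its}$, and $\kappa_{\dis}$.

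The counting itself will be mechanical, so the main obstacle I anticipate is the bookkeeping: handling the lex convention for $\vec e',\vec f'$, tracking vertex distinctness across the $G$- and $K_n$-sides, and carrying out the case analysis on the position of $u_\ell$ in the $K_n$-side counts so that the stated constants come out correctly. The $k$-boundedness hypothesis will enter exclusively through the observation that fixing the colour of any edge leaves at most $k-1$ further edges of that colour, which is what produces the factor $k$ in step~(ii).
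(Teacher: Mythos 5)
Your overall strategy---cliques via the shared vertex $x_\ell$ resp.\ $u_\ell$, and each size bound as (number of admissible $G$-side edge pairs) times (number of admissible bad $K_n$-side images)---is exactly the paper's, and three of the four bounds come out as you describe. However, the bound on $\big|Q_{\its(K_n)}^{(\ell)}\big|$ does not survive the slot-by-slot case analysis you propose. Splitting according to which of the three slots (centre, first outer vertex, second outer vertex) is mapped to $u_\ell$, each case costs at most $(n-1)(k-1)$ (choose the one remaining free vertex of the relevant edge, then spend the colour budget on the other edge), so you get $3(n-1)(k-1)$ bad intersecting images through $u_\ell$ per $G$-cherry and hence $\tfrac12\Delta^2 n\cdot 3(n-1)(k-1)=\tfrac32\Delta^2 n(n-1)(k-1)$ in total. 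This exceeds the claimed $\kappa_{\its}=\Delta^2 n^2 k$ whenever $(n-3)(k-3)>6$, i.e.\ for every $k\ge 4$ in the theorem's range. The defect is structural: for an intersecting pair the ``$u_\ell$ is an endpoint'' and ``$u_\ell$ is the centre'' cases share the same colour-class budget of $k-1$, but your case analysis spends it three times.

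The repair (which is what the paper does) is to bound the number of \emph{unordered} monochromatic cherries of $K_n$ containing $u_\ell$ in one pass: charge each such cherry to an edge $u_\ell v$ of it through $u_\ell$; for each of the $n-1$ choices of $v$, the second edge of the cherry is one of the at most $k-1$ other edges of colour $\chi(u_\ell v)$ in all of $K_n$ (it must additionally meet $\{u_\ell,v\}$, but the global $k$-bound already caps the count at $k-1$). This gives at most $(n-1)(k-1)\le nk$ cherries, hence at most $2nk$ images per $G$-cherry after the two isomorphisms, and $\tfrac12\Delta^2 n\cdot 2nk=\Delta^2 n^2k$ as claimed. Your slot analysis is fine for $\kappa_{\dis}$, where the two edges are disjoint, $u_\ell$ lies in exactly one of them, and the four cases partition the count with cost $2(n-1)(k-1)$ each, giving $8(n-1)(k-1)\le 8nk$ and then $\tfrac18\Delta^2 n^2\cdot 8nk=\kappa_{\dis}$. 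The clique statements and the bounds $\gamma_{\its}$ and $\gamma_{\dis}$ are correct as you set them up.
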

  \begin{proof}
    Clearly $\{X\}\cup Q_G^{(\ell)}$ is a clique in~$D'$ since each of its events
    contains~$x_\ell$, and $Q_{K_n}^{(\ell)}$ is a clique since each of its
    events contains~$u_\ell$.

    The bound on the number of events $X'=X(\vec e\,', \vec f'; \vec a',
    \vec b')$ in $\{X\}\cup Q_{\its(G)}^{(\ell)}$ follows from the following facts.
    We have $x_\ell\in e'\cup f'$ and there are at most $\frac32\Delta^2$
    cherries in~$G$ containing~$x_{\ell}$ (recall that $\vec e\,'$ and $\vec
    f'$ are lexicographically ordered). The two edges forming the cherry
    $a',b'$ can be chosen in $\binom{n}{2}k$ ways, and there are~$2$
    isomorphisms from the cherry $e',f'$ to this cherry.

    In $\{X\}\cup Q_{\dis(G)}^{(\ell)}$ there are at most $\Delta m \cdot (n)_2(2k)$
    events $X'=X(\vec e\,', \vec f'; \vec a', \vec b')$ since there are at
    most $\Delta$ ways to form an edge in~$G$ containing~$x_\ell$. For the
    second edge of $e',f'$ there are at most~$m$ possibilities. Moreover,
    for~$\vec a'$ we have $(n)_2$ choices.  Since $\chi(a')=\chi(b')$ this
    leaves at most~$k$ choices for $b'$ and there are~$2$ ways to
    obtain~$\vec b'$ from~$b'$. The bound claimed in~\eqref{eq:rainbow:Q}
    then follows from $m\le\frac12\Delta n$.

    For events $X'=X(\vec e\,', \vec f'; \vec a', \vec b')$ in
    $Q_{\its(K_n)}^{(\ell)}$ observe that there are at most
    $\binom{\Delta}{2}n\le\frac12\Delta^2n$ cherries in~$G$. It follows that we can
    choose $\vec e\,' , \vec f'$ in at most $\frac12\Delta^2 n$ ways.
    Moreover, $u_\ell\in a'\cup b'$. For finding a monochromatic cherry
    in~$K_n$ which contains~$u_\ell$ we have at most~$nk$ choices, and
    there are~$2$ isomorphisms from the cherry $e',f'$ to this
    cherry. Hence there are at most $\Delta^2 n^2 k$ such events.

    Finally, the number of events $X'=X(\vec e\,', \vec f'; \vec a', \vec
    b')$ contained in $Q_{\dis(K_n)}^{(\ell)}$ is at most $\frac18\Delta^2 n^2\cdot
    4\cdot n(2k)$ because there are at most $\binom{m}{2}\le\frac18\Delta^2
    n^2$ possibilities to choose $\vec e\,',\vec f'$, and because $u_\ell$
    can be each of the~$4$ vertices in $a'\cup b'$. For the second
    vertex in the edge of~$X'$ which contains~$x$, say~$a'$, there are then
    at most~$n$ possibilities, and for $\vec b'$ we have again~$2k$ choices.
  \end{proof}

  Now we define for each $B\in\cX$ the number
  \begin{equation}
    \label{eq:rainbow:mu}
    \mu_B:=\begin{cases}
      \mu_{\its} := \big( \frac{14}{10n} \big)^3 & \quad\text{if $B$ is of intersecting type} \\
      \mu_{\dis} := \big( \frac{14}{10n} \big)^4 & \quad\text{if $B$ is of disjoint type} \\
    \end{cases}
  \end{equation}
  and claim that with these numbers Condition~\eqref{eq:LLLL:new:mu'} is
  satisfied. Indeed, let $B_{\its}\in\cX$ be an arbitrary intersecting event
  and $B_{\dis}\in\cX$ an arbitrary disjoint event. Claim~\ref{cl:rainbow:Q}
  and Condition~\eqref{eq:LLLL:new:mu'} imply together with~\eqref{eq:rainbow:mu}
  that it is sufficient to check the two conditions
  \begin{reequation}{\ref{eq:LLLL:new:mu'}}
    \label{eq:rainbow:LLLL}
    \begin{aligned}
      \Prob(B_{\its})
      &\le \frac{ \mu_{\its} }
      { ( 1+ \gamma_{\its}\mu_{\its}+\gamma_{\dis}\mu_{\dis} )^3
        ( 1+ \kappa_{\its}\mu_{\its} +\kappa_{\dis}\mu_{\dis})^3
      } =: p_{\its}  \,, \\
      \Prob(B_{\dis})
      &\le \frac{ \mu_{\dis} }
      { ( 1 + \gamma_{\its}\mu_{\its}+\gamma_{\dis}\mu_{\dis} )^4
        ( 1 + \kappa_{\its}\mu_{\its}+\kappa_{\dis}\mu_{\dis} )^4
      } =: p_{\dis} \,.
      % \Prob(B_{\its})
      % &\le \frac{ \mu_{\its} }
      % { ( 1+\Delta^2n^3 k \mu_{\dis}+ 2\Delta^2n^2 k\mu_{\its} )^3
      % ( 1+\Delta^2n^3k\mu_{\dis}+ \frac32\Delta^2n^2k\mu_{\its})^3
      % }  \,, \\
      %   \Prob(B_{\dis})
      %   &\le \frac{ \mu_{\dis} }
      %   { ( 1+\Delta^2n^3 k \mu_{\dis}+ 2\Delta^2n^2 k\mu_{\its} )^4 
      %   ( 1+\Delta^2n^3k\mu_{\dis}+ \frac32\Delta^2n^2k\mu_{\its})^4
      % }  \,, \\
    \end{aligned}
  \end{reequation}
  Since $k\le n/(51\Delta^2)$
  we have
  \begin{multline*}
    ( 1 + \gamma_{\its}\mu_{\its}+\gamma_{\dis}\mu_{\dis} )
    ( 1 + \kappa_{\its}\mu_{\its}+\kappa_{\dis}\mu_{\dis} ) \\
    \begin{aligned}
      & \leByRef{eq:rainbow:Q}
      ( 1+ \tfrac32\Delta^2n^2k\mu_{\its}+\Delta^2n^3k\mu_{\dis})
      ( 1+ \Delta^2n^2 k\mu_{\its}+\Delta^2n^3 k \mu_{\dis} ) \\
     & \leByRef{eq:rainbow:mu}
      \Big( 1 + 
        \tfrac32\cdot\tfrac1{51}\big(\tfrac{14}{10}\big)^3 +
        \tfrac1{51}\big(\tfrac{14}{10}\big)^4
      \Big)
      \Big( 1 + 
        \tfrac1{51}\big(\tfrac{14}{10}\big)^3 +
        \tfrac1{51}\big(\tfrac{14}{10}\big)^4
      \Big)
      \le \tfrac{50}{51}\cdot\tfrac{14}{10} \,,
    \end{aligned}
  \end{multline*}
  where the last inequality can be easily verified numerically.
  This implies the second part of~\eqref{eq:rainbow:LLLL} because
  \begin{align*}
    p_{\dis} 
    \ge \frac{\mu_{\dis}}{ \big(\tfrac{50}{51}\cdot\tfrac{14}{10} \big)^4}
    \eqByRef{eq:rainbow:mu} \Big(\frac{51}{50n}\Big)^4
    \ge \frac{1}{(n)_4}
    \eqByRef{eq:canon:P}\Prob(B_{\dis}) \,,
  \end{align*}
  where we used $n\ge 77$ in the last inequality.
 The first part of~\eqref{eq:rainbow:LLLL} follows analogously.
  Hence we can apply Lemma~\ref{lem:LLLL:new}\ref{lem:LLLL:new:mu} to conclude
  that $\Prob(\bigcap_{B\in\cX}\BAR{B})>0$, which finishes our proof.
\end{proof}

\section{Concluding remarks}

In this paper we showed how the framework developed by Lu and
Sz\'ekely~\cite{LuSze07} for applying the local lemma to random injections 
can be used for obtaining results about copies of spanning graphs in
bounded or locally bounded edge colourings of~$K_n$. 
In our proofs we used the version of the local lemma given in
Lemma~\ref{lem:LLLL:new}, which enabled us to obtain better constants than
Lemma~\ref{lem:LLLL} would have yielded.
We close with two remarks:
\begin{enumerate}[label=\rom,leftmargin=*]
  \item If $n\ge 100$ then in Theorem~\ref{thm:rainbow} the constant~$51$ can be improved
    to~$42$, using calculations analogous to those in the proof of
    Theorem~\ref{thm:rainbow}.
  \item Albert, Frieze, and Reed~\cite{AlbFriRee95} used the local lemma in the
    form of Lemma~\ref{lem:LLLL} to obtain
    Theorem~\ref{thm:Albert}. Using Lemma~\ref{lem:LLLL:new} instead, one
    can improve the constant~$64$ in Theorem~\ref{thm:Albert} to~$38$,
    if~$n$ is sufficiently large.
\end{enumerate}

\section{Acknowledgements}

We thank Noga Alon for helpful discussions.

%%%% BIBLIOGRAPHY %%%%%%%%%%%%%%%%%%%%%%%%%%%%%%%%%%%%%%%%%%%%%%%%%%%%%

\bibliographystyle{amsplain_yk}
\bibliography{rainbow}

\providecommand{\bysame}{\leavevmode\hbox to3em{\hrulefill}\thinspace}
\providecommand{\MR}{\relax\ifhmode\unskip\space\fi MR }
% \MRhref is called by the amsart/book/proc definition of \MR.
\providecommand{\MRhref}[2]{%
  \href{http://www.ams.org/mathscinet-getitem?mr=#1}{#2}
}
\providecommand{\href}[2]{#2}
\def\MR#1{\relax}
\begin{thebibliography}{10}

\bibitem{AlbFriRee95}
M.~Albert, A.~Frieze, and B.~Reed, \emph{Multicoloured {H}amilton cycles},
  Electron. J. Comb. \textbf{2} (1995), Research Paper R10, 13 p.

\bibitem{AloGut}
N.~Alon and G.~Gutin, \emph{Properly colored {H}amilton cycles in edge-colored
  complete graphs}, Random Structures Algorithms \textbf{11} (1997), no.~2,
  179--186.

\bibitem{AloJiaMilPri}
N.~Alon, T.~Jiang, Z.~Miller, and D.~Pritikin, \emph{Properly colored subgraphs
  and rainbow subgraphs in edge-colorings with local constraints}, Random
  Struct. Algorithms \textbf{23} (2003), no.~4, 409--433.

\bibitem{BisFerProSco}
R.~Bissacot, R.~Fern{\'a}ndez, A.~Procacci, and B.~Scoppola, \emph{An
  improvement of the {Lov\'asz} local lemma via cluster expansion}, Preprint
  (arXiv:0910.1824).

\bibitem{BolErd76}
B.~Bollob{\'a}s and P.~Erd\H{o}s, \emph{{Alternating Hamiltonian cycles.}},
  Isr. J. Math. \textbf{23} (1976), 126--131.

\bibitem{BoeKohTar}
J.~B{\"o}ttcher, Y.~Kohayakawa, and A.~Taraz, \emph{Almost spanning subgraphs
  of random graphs after adversarial edge removal}, Preprint
  (arXiv:1003.0890v1).

\bibitem{CheDay}
C.~C. Chen and D.~E. Daykin, \emph{Graphs with hamiltonian cycles having
  adjacent lines different colors}, J. Combin. Theory Ser. B \textbf{21}
  (1976), no.~2, 135--139.

\bibitem{ErdSpe91}
P.~Erd\H{o}s and J.~H. Spencer, \emph{Lopsided {Lov\'asz} local lemma and
  {L}atin transversals}, Discrete Appl. Math. \textbf{30} (1991), 151--154.

\bibitem{ErdRad}
P.~Erd{\H{o}}s and R.~Rado, \emph{A combinatorial theorem}, J. London Math.
  Soc. \textbf{25} (1950), 249--255.

\bibitem{ErdLov75}
P.~Erd{\H{o}}s and L.~Lov{\'a}sz, \emph{Problems and results on $3$-chromatic
  hypergraphs and some related questions}, Infinite finite Sets, Colloq. Honour
  Paul Erd\H{o}s, Keszthely 1973, Colloq. Math. Soc. Janos Bolyai 10, 1975,
  pp.~609--627.

\bibitem{ErdNesRod}
P.~Erd{\H{o}}s, J.~Ne{\v{s}}et{\v{r}}il, and V.~R{\"o}dl, \emph{On some
  problems related to partitions of edges of a graph}, Graphs and other
  combinatorial topics ({P}rague, 1982), Teubner-Texte Math., vol.~59, Teubner,
  Leipzig, 1983, pp.~54--63.

\bibitem{FriKri08}
A.~Frieze and M.~Krivelevich, \emph{On rainbow trees and cycles}, Electron. J.
  Comb. \textbf{15} (2008), no.~1, Research Paper R59, 9 p.

\bibitem{FriRee}
A.~Frieze and B.~Reed, \emph{Polychromatic {H}amilton cycles}, Discrete Math.
  \textbf{118} (1993), no.~1-3, 69--74.

\bibitem{HahTho}
G.~Hahn and C.~Thomassen, \emph{Path and cycle sub-{R}amsey numbers and an
  edge-colouring conjecture}, Discrete Math. \textbf{62} (1986), no.~1, 29--33.

\bibitem{LuSze07}
L.~Lu and L.~Sz{\'e}kely, \emph{Using {Lov\'asz} local lemma in the space of
  random injections}, Electron. J. Comb. \textbf{14} (2007), no.~1, Research
  Paper R63, 13 p.

\bibitem{Ramsey}
F.~P. Ramsey, \emph{On a problem of formal logic}, Proc. London Math. Soc.
  \textbf{30} (1930), no.~2, 264--286.

\bibitem{Rue}
R.~Rue, \emph{Comment on{~\cite{AlbFriRee95}}}, Electron. J. Comb. \textbf{2}
  (1995), Comment on Research Paper R10.

\bibitem{Shearer79}
J.~B. Shearer, \emph{A property of the colored complete graph}, Discrete Math.
  \textbf{25} (1979), 175--178.

\end{thebibliography}

\end{document}